\newcommand{\sub}{\underline}
\newcommand{\real}{\mathbb{R}}
\newcommand{\nat}{\mathbb{N}}
\newcommand{\hiper}{\mathbb{H}}
\newcommand{\esf}{\mathbb{S}}
\newcommand{\var}{\mathbb{M}}
\newcommand{\bol}{\mathbb{B}}
\newcommand{\lo}{\mathbb{L}}
\newcommand{\arcsec}{\: \mathrm{arcsec} \:}
\newcommand{\interior}{\: \mathrm{int} \:}
\newcommand{\graph}{\: \mathrm{graph} \:}
\newcommand{\diverg}{\: \mathrm{div} \:}
\newcommand{\osc}{\: \mathrm{osc} \:}
\newcommand{\dist}{\: \mathrm{dist_{\hiper^2}} \:}
\newcommand{\disto}{\: \mathrm{dist_{\hiper^2\times\{0\}}} \:}
\newcommand{\distn}{\: \mathrm{dist_{\hiper^n}} \:}
\newcommand{\g}{\mathcal{G}}
\newcommand{\ca}{\mathcal{C}}
\newcommand{\de}{\mathcal{D}}
\newcommand{\ho}{\mathcal{H}}
\newcommand{\ce}{\mathscr{C}}
\newcommand{\h}{\mathscr{H}}
\newcommand{\dom}{\mathscr{D}}
\theoremstyle{plain}
\newtheorem{thm}{Theorem}[section]
\newtheorem{prop}[thm]{Proposition}
\theoremstyle{definition}
\newtheorem{dfn}[thm]{Definition}
\theoremstyle{remark}
\newtheorem{rem}[thm]{Remark}
\numberwithin{equation}{section}
\numberwithin{thm}{section}
\begin{document}

\subjclass{Primary 53A10. Secondary 35J25, 53C42}

\title[COMPACT MINIMAL VERTICAL GRAPHS WITH NON-CONNECTED BOUNDARY]{COMPACT MINIMAL VERTICAL GRAPHS WITH NON-CONNECTED BOUNDARY IN $\mathbb{H}^n\times\mathbb{R}$}
\author{ALINE MAURICIO BARBOSA}

\address{Departamento de Matemática\\
Instituto de Ciências Exatas\\
Universidade Federal Rural do Rio de Janeiro\\
BR 465, km 7 - 23890-000 - Seropédica, RJ\\
Brazil}

\email{alinanet@ufrrj.br}

\thanks{The author would like to thank CAPES Agency, for partial financial support, and Maria Fernanda Elbert, for her suggestions, critical reading and encouragement during the preparation of this paper.}

\begin{abstract}
We study the existence and uniqueness problem of compact minimal vertical graphs in $\mathbb{H}^n\times\mathbb{R}$, $n\geq 2$, over bounded domains in the slice $\mathbb{H}^n\times\{0\}$, with non-connected boundary having a finite number of $C^0$ hypersufaces homeomorphic to the sphere $\mathbb{S}^{n-1}$, with prescribed bounded continuous boundary data, under hypotheses relating those data and the geometry of the boundary. We show the nonexistence of compact minimal vertical graphs in $\mathbb{H}^n\times\mathbb{R}$ having the boundary in two slices and the height greater than or equal to $\pi/(2n-2)$. 

\vspace*{0.3 cm}

\noindent \textsc{Keywords:} vertical graphs, minimal graphs, bounded domains, non-connected boundary, slice, Dirichlet Problem, Perron process.

\end{abstract}

\maketitle

%%%%%%%%%%%%%%%%%%%%%%%%%%%%%%%%%%%%%%%%%%%%%%%%%%%%%%%%%%%%%%%%%%%%%%%%%%%%%%%%%%%%%%%%%%%%%

\section{Introduction}

\indent\par In this paper, we consider the product space $\hiper^n\times\real$. We study the existence and the uniqueness of minimal graphs over bounded domains of the slice $\hiper^n\times\{0\}$ with non-connected boundary having a finite number of $C^0$ hypersurfaces which are homeomorphic to $\esf^{n-1}$ and satisfy determined interior or exterior sphere conditions or convexity condition, with prescribed bounded continuous boundary data. (For the precise definitions of \textit{interior/ exterior sphere condition} or \textit{convexity condition}, see Definition \ref{def:ceie}.)

We denote by $\distn(p_1,p_2)$ the hyperbolic distance between two points $p_1$, $p_2\in\hiper^n$ and as usual define the hyperbolic distance between two sets $A_1$, $A_2\subset\hiper^n$ by
$$\distn(A_1,A_2)=\inf\,\{\distn(p_1,p_2); p_1\in A_1, \, p_2\in A_2\}.$$

For each $r>0$, let us set 
$$\g_r(\rho)=\int_r^{\rho} \frac{\sinh^{n-1} r}{\sqrt{\sinh^{2n-2}\xi-{\sinh}^{2n-2} r}} \; d\xi, \;\; \rho\in[r,+\infty).$$

We shall see in Subsection \ref{subsec:exgrmin} that the function above define the generator curve of an $n$-dimensional half-catenoid in $\hiper^n\times\real$.

The first theorem of this paper guarantees the existence of compact minimal hypersurfaces with boundary in two slices of $\hiper^n\times \real$, given as vertical graphs over bounded domains with non-connected boundary in $\hiper^n\times \{0\}$, under hypotheses which relate the distance between those slices and the geometric nature of the boundary of these domains. More precisely:

\begin{thm}
Let $\Gamma_1$, ..., $\Gamma_k$, $\Gamma$ be $C^0$ hypersurfaces in the slice $\hiper^n\times\{0\}$, which are homeomorphic to $\esf^{n-1}$. Assume that each hypersurface $\Gamma_i$, $i=1$, ..., $k$, is contained in the interior of the region of $\hiper^n\times\{0\}$ bounded by $\Gamma$ and that the closed regions bounded by the hypersurfaces $\Gamma_i$ are pairwise disjoint. Assume also that each $\Gamma_i$,  $i=1$, ..., $k$, satisfies the interior sphere condition of some radius $0<R_1<\infty$ and that $\Gamma$ satisfies the exterior sphere condition of some radius $0<R_2<\infty$, if $n\geq 2$, or of radius $R_2=\infty$, if $n=2$.

\noindent Denote by:
\begin{eqnarray}
\delta & = & \distn \left(\bigcup_{i=1}^k \Gamma_i, \Gamma\right),\nonumber\\
 & & \nonumber\\
\Omega & = & \mbox{the domain in }\hiper^n\times\{0\} \mbox{ bounded by }\Gamma_i,\; i=1,\; ...,\; k, \nonumber\\
 & & \mbox{ and by }\Gamma,\nonumber\\
 & & \nonumber\\
\alpha_1 & = & \g_{R_1}(R_1+\delta),\label{eq:alfar1}\\
 & & \nonumber\\
\alpha_2 & = & \left\{
\begin{array}{l}
{\displaystyle \g_{R_2}(R_2+\delta),  \mbox{ if }  0<R_2<\infty \mbox{ and } n\geq 2,}\\ 
 \\
\arcsec (e^{\delta}),  \mbox{ if }  R_2=\infty \mbox{ and } n=2.
\end{array}
\right.\label{eq:alfar2}
\end{eqnarray}
If $h$ is a real number such that
\begin{equation}
0\leq h\leq \min_{j=1,2}\alpha_j
\label{eq:alfamin}
\end{equation}
then there is a unique compact minimal hypersurface in $\hiper^n\times\real$, given as the vertical graph of a function $u\in C^2(\Omega)\cap C^0(\bar{\Omega})$ such that $\left. u \right|_{\Gamma}=0$ and $\left. u \right|_{\Gamma_i}=h$, $i=1$, ..., $k$.
\label{teo:ac}
\end{thm}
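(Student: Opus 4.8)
The plan is to realize the surface as the vertical graph of the solution of the Dirichlet problem for the minimal surface operator on $\Omega$ with the prescribed boundary data, solving it by the Perron process and deducing uniqueness from the maximum principle. A vertical graph of $u$ is minimal in $\hiper^n\times\real$ exactly when $u$ satisfies the associated quasilinear elliptic equation $Q[u]=0$, where $Q$ is the mean curvature (minimal surface) operator of $\hiper^n$. Since $Q$ carries no zeroth-order term, the difference of two solutions solves a linear homogeneous elliptic equation, so the maximum principle applies and yields the comparison principle: a subsolution lying below a supersolution on $\partial\Omega$ stays below it throughout $\Omega$. \emph{Uniqueness} is then immediate, as two solutions sharing the same boundary data bound each other from both sides. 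The constant functions $0$ and $h$ are themselves minimal graphs, hence global sub/supersolutions compatible with the data, so every Perron subfunction satisfies $0\le u\le h$ and the Perron family is nonempty and uniformly bounded.

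The decisive ingredient for \emph{existence} is the construction of boundary barriers out of the half-catenoids whose generating curve is $\g_r$. Fix $p\in\Gamma_i$. By the interior sphere condition of radius $R_1$ I place a geodesic sphere $S_{R_1}$ of radius $R_1$ inside the hole bounded by $\Gamma_i$, tangent to $\Gamma_i$ at $p$, and take the descending half-catenoid with neck on $S_{R_1}$ adjusted so its neck sits at height $h$; as the radial coordinate runs from $R_1$ to $R_1+\delta$ this graph falls by $\g_{R_1}(R_1+\delta)=\alpha_1$, so the hypothesis $h\le\alpha_1$ forces it down to height $\le 0$ by the time it reaches $\Gamma$ (at distance $\delta$). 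Being a minimal graph lying below the boundary data, this catenoid is a \emph{lower} barrier at $p$, pinning $u$ to the value $h$ on $\Gamma_i$. Symmetrically, for $q\in\Gamma$ the exterior sphere condition of radius $R_2$ provides a geodesic sphere $S_{R_2}$ outside $\Omega$, tangent to $\Gamma$ at $q$, and the ascending half-catenoid with neck on $S_{R_2}$ at height $0$ rises by $\alpha_2=\g_{R_2}(R_2+\delta)$ over the distance $\delta$; since $h\le\alpha_2$ it lies above the data (it exceeds $h$ on the $\Gamma_i$), making it an \emph{upper} barrier at $q$ that pins $u$ to $0$ on $\Gamma$. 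The constant $h$ (resp. $0$) supplies the matching upper (resp. lower) barrier on the remaining side. Thus the height condition $0\le h\le\min_{j}\alpha_j$ is exactly what makes both families of catenoid barriers admissible.

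With barriers available at every boundary point I would run the Perron process in the usual manner: set $u(x)=\sup\{v(x):v\text{ a subfunction}\}$, and show by minimal replacement over small balls together with interior regularity and compactness for $Q$ that $u\in C^2(\Omega)$ solves $Q[u]=0$; the barriers then force $u$ to extend continuously to $\bar{\Omega}$ with $\left.u\right|_\Gamma=0$ and $\left.u\right|_{\Gamma_i}=h$, giving $u\in C^2(\Omega)\cap C^0(\bar{\Omega})$ as required.

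I expect the main obstacle to be the barrier construction itself, and in particular verifying that the catenoid pieces genuinely fit the prescribed boundary geometry: one must seat the necks using the sphere conditions, estimate the rise and fall of the generating curve $\g_R$ over the separation distance $\delta$, and confirm that the admissibility threshold is precisely $\min_{j}\alpha_j$. The degenerate case $R_2=\infty$, $n=2$, in which the exterior barrier is no longer a catenoid but the horocyclic minimal profile of height $\arcsec(e^{\delta})$, will need to be treated on its own.
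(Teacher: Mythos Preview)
Your proposal is correct and follows essentially the same approach as the paper: Perron process with the constants $0$ and $h$ as global sub/supersolution, catenoid pieces seated via the interior and exterior sphere conditions as the nontrivial barriers on $\Gamma_i$ and $\Gamma$ respectively, and the horocycle-foliated minimal surface of Daniel for the special case $R_2=\infty$, $n=2$. The only cosmetic differences are that the paper truncates the catenoid barriers by taking $\max\{-\g_{c_p^*,R_1}+h,\,0\}$ and $\min\{\g_{d_p,R_2},\,h\}$ rather than using the full catenoid graph over $\bar\Omega$ (both work), and that ``tangent to $\Gamma_i$'' should be read as ``passing through $p$ and contained in the closed region bounded by $\Gamma_i$'', since the boundary is only $C^0$.
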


Theorem \ref{teo:ac} is a version for $\hiper^n\times\real$ of Theorem 2.1 of \cite{er} (p. 606-607) in $\real^3$. In the mentioned theorem of \cite{er}, N. Espirito-Santo and J. Ripoll have shown the existence of a compact minimal graph with boundary having a finite number of Jordan curves in two parallel planes of $\real^3$, under geometric conditions given for those curves and for the distance $h$ between these parallel planes. It is important to mention that the problem of existence of minimal surfaces (and more generally, of constant mean curvature surfaces) in $\real^3$ with boundary given by curves contained in parallel planes has also been studied in other papers, see for instance \cite{fr}, \cite{afr} and \cite{af}. These papers differ from \cite{er}, because they consider graphs over domains of the Euclidean sphere $\esf^2\subset\real^3$ (radial graphs), while in \cite{er}, the authors have worked with graphs over planar domains.

The second theorem of this paper guarantees the existence of compact minimal hypersurfaces in $\hiper^n\times \real$, given as vertical graphs over bounded domains with non-connected boundary in $\hiper^n\times\{0\}$, with bounded continuous boundary data which are not necessarily locally constant, under hypotheses which relate these data and the geometric nature of the boundary of these domains, see Theorem \ref{teo:d}.

\begin{thm}
Let $\Gamma_1$, ..., $\Gamma_k$, $\Gamma$ be $C^0$ hypersurfaces in $\hiper^n\times\{0\}$, which are homeomorphic to $\esf^{n-1}$. Assume that each hypersurface $\Gamma_i$, $i=1$, ..., $k$, is contained in the interior of the region of $\hiper^n\times\{0\}$ bounded by $\Gamma$ and that the closed regions bounded by the hypersurfaces $\Gamma_i$ are pairwise disjoint. Assume also that each $\Gamma_i$,  $i=1$, ..., $k$, satisfies the interior sphere condition of some radius $0<R<\infty$ and that the hypersurface $\Gamma$ is convex.

\noindent Denote by:
\begin{eqnarray}
\delta & = & \distn \left(\bigcup_{i=1}^k \Gamma_i, \Gamma\right),\nonumber\\
 & & \nonumber\\
\Omega & = & \mbox{the domain in }\hiper^n\times\{0\} \mbox{ bounded by }\Gamma_i,\; i=1,\; ...,\; k, \nonumber\\
 & & \mbox{ and by }\Gamma,\nonumber\\
 & & \nonumber\\
\alpha & = & \g_R(R+\delta).\label{eq:alfarn}
\end{eqnarray}
Suppose that $f:\Gamma\rightarrow\real$ is a continuous function such that
$$\osc f \leq \alpha,$$
where $\osc f:=\max f-\min f$ is the oscillation of $f$ in $\Gamma$.

\noindent If $h$ is a real number such that
\begin{equation}
\max f\leq h \leq \min f+\alpha,
\label{eq:oscn}
\end{equation}
then there is a unique compact minimal hypersurface in $\hiper^n\times\real$, given as the vertical graph of a function $u\in C^2(\Omega)\cap C^0(\bar{\Omega})$ such that $\left. u \right|_{\Gamma}=f$ and $\left. u \right|_{\Gamma_i}=h$, $i=1$, ..., $k$.
\label{teo:d}
\end{thm}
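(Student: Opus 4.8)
The plan is to solve the Dirichlet problem for the minimal vertical graph equation
\[
\mathcal{Q}[u]:=\diverg\!\left(\frac{\nabla u}{\sqrt{1+|\nabla u|^2}}\right)=0\quad\text{in }\Omega,
\]
with $u=h$ on each $\Gamma_i$ and $u=f$ on $\Gamma$, by the Perron process. Uniqueness is immediate from the comparison principle for the quasilinear elliptic operator $\mathcal{Q}$: two solutions in $C^2(\Omega)\cap C^0(\bar{\Omega})$ with the same boundary values must coincide. For existence, recall that the Perron method yields a solution in $\Omega$ once (i) interior gradient estimates for $\mathcal{Q}$ are available (so that the Perron function, the supremum of the subsolutions lying below the data, is itself a $C^2$ solution in the interior), and (ii) a local barrier exists at every point of $\partial\Omega$ (forcing continuous attainment of the data). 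Here the constants $\min f$ and $h$ are, respectively, a subsolution and a supersolution bounding the data (using $h\geq\max f$), so the Perron family is nonempty and the Perron function $u$ satisfies $\min f\le u\le h$ on $\bar\Omega$. Everything thus reduces to constructing the barriers.

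At an interior boundary $\Gamma_i$ I would use the interior sphere condition together with the half-catenoid generated by $\g_R$. Fix $p\in\Gamma_i$ and let $B(q,R)$ be the geodesic ball of radius $R$ contained in the region enclosed by $\Gamma_i$ and tangent to $\Gamma_i$ at $p$; on the exterior of this ball the map $x\mapsto\g_R(\distn(q,x))$ is a minimal graph, vanishing on $\partial B(q,R)$ and increasing with $\distn(q,x)$. Since $u\le h$ everywhere, the bound $u(p)\le h$ is automatic. For the reverse inequality I use the downward catenoid $v(x)=h-\g_R(\distn(q,x))$: it equals $h$ at $p$, satisfies $v\le h=u$ on $\Gamma_i$ near $p$, and decreases to the level $\min f$ exactly on the geodesic sphere $\distn(q,\cdot)=r_*$ with $\g_R(r_*)=h-\min f$. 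The hypothesis $h\le\min f+\alpha=\min f+\g_R(R+\delta)$ gives $r_*\le R+\delta$; since the geodesic from $q$ through $p$ meets $\Gamma$ only after length $R+\delta$ (because $\distn(p,\Gamma)\ge\delta$), the sphere $\distn(q,\cdot)=r_*$ stays inside $\Omega$ near $p$. On the annular region $R\le\distn(q,\cdot)\le r_*$ adjacent to $p$ one then has $v\le u$ on the whole boundary ($v\le h=u$ on $\Gamma_i$ and $v=\min f\le u$ on the outer sphere), whence $v\le u$ inside and $u(p)\ge v(p)=h$. Therefore $u(p)=h$, and $u$ attains the data continuously along each $\Gamma_i$.

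At the outer boundary $\Gamma$ I would use convexity. Since $\Omega$ lies inside the convex region bounded by $\Gamma$, the inner normal of $\Gamma$ points into that region, so $\Gamma$ is mean-convex as seen from $\Omega$; this is precisely the situation in which $\mathcal{Q}$ admits boundary barriers for arbitrary continuous data, the geometric (distance-function) barrier being a super-/subsolution thanks to the nonnegative mean curvature of $\Gamma$. To handle the non-constant $f$, at each $p_0\in\Gamma$ I superpose on this geometric barrier an additive constant equal to the oscillation of $f$ over a small boundary neighborhood of $p_0$; by continuity of $f$ this constant tends to $0$ as the neighborhood shrinks, yielding upper and lower barriers that equal $f(p_0)$ at $p_0$ and dominate (respectively, are dominated by) $f$ nearby. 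Since $u$ is already trapped in $[\min f,\,h]$ with $h\le\min f+\alpha$, these barriers force $u(p_0)=f(p_0)$. No height restriction is imposed by $\Gamma$ itself; the inequality $\osc f\le\alpha$ merely expresses that the admissible range $[\max f,\ \min f+\alpha]$ for $h$ is nonempty.

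The main obstacle is step (ii) at $\Gamma_i$, namely the quantitative fit of the catenoidal lower barrier: the whole force of the radius $R$, the gap $\delta$, and the constant $\alpha=\g_R(R+\delta)$ is concentrated in the requirement that the downward half-catenoid descend from $h$ to $\min f$ before crossing the gap, which is exactly $h-\min f\le\alpha$. The remaining ingredients—the interior gradient estimate underpinning the interior regularity of the Perron function, and the mean-convex barrier at the convex boundary $\Gamma$—are standard for $\mathcal{Q}$ in $\hiper^n\times\real$ and, in the locally constant case, already underlie Theorem \ref{teo:ac}.
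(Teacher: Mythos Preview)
Your overall strategy---Perron process, catenoidal lower barriers at the $\Gamma_i$, and convexity-based barriers at $\Gamma$---matches the paper's, and your identification of the quantitative role of $h\le\min f+\alpha$ is exactly right. There are, however, two gaps.

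The more serious one is at $\Gamma$. You invoke a ``distance-function barrier'' justified by the nonnegative mean curvature of $\Gamma$, but $\Gamma$ is only assumed to be a $C^0$ hypersurface: its mean curvature is not defined, and the signed distance to $\partial\Omega$ is not $C^2$ in a collar, so the Serrin-type construction you allude to does not apply. The paper deals with this by using, as barriers in the sense of Definition~\ref{def:barreiras}-1, sequences of Scherk-type minimal graphs (Theorem~\ref{teo:nrscherk} for $n=2$, the rotational Scherk hypersurfaces of Theorem~\ref{teo:scherkrot} for $n\ge3$). At a $C^0$-convex point $p_0\in\Gamma$ there is a supporting geodesic hyperplane, and one places a small Scherk graph so that its $+\infty$ side sits along that hyperplane and its finite side takes a value arbitrarily close to $f(p_0)$; see Remark~\ref{obs:barreiras}. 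This is the mechanism that replaces the distance-function barrier under mere $C^0$ convexity, and it is why no height restriction comes from $\Gamma$.

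The second issue is a circularity in your $\Gamma_i$ argument. When you compare $v$ with $u$ on the annulus you write ``$v\le h=u$ on $\Gamma_i$'', but $u=h$ on $\Gamma_i$ is precisely what you are trying to establish. In addition, the annulus $\{R\le\distn(q,\cdot)\le r_*\}$ need not lie in $\Omega$: it may meet the regions enclosed by the other $\Gamma_j$. The paper avoids both problems by working globally rather than by a local comparison. After normalizing $\min f=0$, it sets
\[
v_p=\max\{\,h-\g_{c_p^*,R},\,0\,\}\quad\text{on all of }\bar\Omega,
\]
which is continuous and is the maximum of two subsolutions; it satisfies $v_p\le h$ on every $\Gamma_j$ and, because $r_*\le R+\delta$ forces the catenoid to have dropped below $0$ before reaching $\Gamma$, also $v_p=0\le f$ on $\Gamma$. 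Thus $v_p$ belongs to the Perron class $S_1$, so $u\ge v_p$ everywhere by definition of $u$, and in particular $u(p)\ge v_p(p)=h$, with no appeal to the as-yet-unknown boundary values of $u$.
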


As in \cite{er}, the main ingredient for the proof of Theorems \ref{teo:ac} and \ref{teo:d} of this paper is Perron process, described in Subsection \ref{subsec:perron}. This process requires the construction of barriers at each boundary point of the considered domain, to guarantee that the solution $u\in C^2(\Omega)$ provided by the process for the minimal equation in $\hiper^n\times\real$ extends continuously up to the boundary and satisfies the boundary data condition. (The precise definition of barriers will be established in Subsection \ref{subsec:perron}.) What allows the construction of barriers in the proof of Theorem \ref{teo:ac} is the existence of rotational minimal hypersurfaces (slices and $n$-dimensional catenoids) in $\hiper^n\times\real$ presented, for instance, in the papers \cite{nr}, \cite{st:screw} and \cite{nsst} for $n=2$ and in \cite{bs} for $n\geq 2$, and the existence of a minimal surface foliated by horizontal horocycles, presented in \cite{d}. The construction of barriers for the proof of Theorem \ref{teo:d} is possible due to the existence of Scherk type minimal hypersurfaces in $\hiper^n\times\real$, presented, for instance, in \cite{nr} and \cite{st:screw} for $n=2$ and in \cite{st:hnxr} for $n\geq 3$. We emphasize that Theorem \ref{teo:ac} is not consequence of Theorem \ref{teo:d}, since the exterior sphere condition for a compact without boundary hypersurface in $\hiper^n$ does not imply its convexity. (This fact will be shown in Section \ref{sec:cond}.)

Also in the paper \cite{er}, N. Espirito-Santo and J. Ripoll have obtained a non-existence result for connected compact minimal graphs with boundary in parallel planes of $\real^3$, defined over a anullar domain in one of these planes, under hypotheses which relate the distance between these planes to the diameter of the outside boundary curve of the domain. (See Proposition 3.1 in \cite{er}, p. 615, to know the precise result.)

We obtain in our paper the following non-existence result in $\hiper^n\times\real$:

\begin{prop}
Let $\Pi_1:=\hiper^n\times\{0\}$ and $\Pi_2:=\hiper^n\times\{h\}$ be two slices of $\hiper^n\times \real$, where $h>0$. Let $\alpha\subset \Pi_1$ and $\beta\subset \Pi_2$ be hypersurfaces homeomorphic to $\esf^{n-1}$, such that the orthogonal projection $\beta^*$ of $\beta$ on the slice $\Pi_1$ is in the  domain enclosed by $\alpha$. If $h\geq\pi/(2n-2)$, then it does not exist a connected compact minimal vertical graph $M$ over the domain $\Omega\subset\Pi_1$ bounded by $\alpha\cup\beta^*$, with boundary $\partial M=\alpha\cup\beta$.
\label{prop:inexn}
\end{prop}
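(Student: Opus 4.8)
The plan is to derive a contradiction via a height estimate / maximum-principle comparison argument, using a rotationally invariant minimal hypersurface (an $n$-dimensional catenoid) as a barrier from above. The key obstruction to existence should be that a minimal graph whose boundary sits in two slices at height difference $h$ cannot be "too tall": the geometry of minimal hypersurfaces in $\hiper^n\times\real$ forces a uniform bound on how much height a connected minimal graph can gain between an inner boundary component and an outer one, and this bound is exactly $\pi/(2n-2)$. So I would aim to show that the existence of such a graph $M$ would violate this intrinsic maximal height.

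First I would recall the explicit rotational minimal hypersurfaces in $\hiper^n\times\real$ whose generating curve is governed by $\g_r$, the catenoid-type family introduced before Theorem \ref{teo:ac}. The essential analytic fact is the total height gained along a full catenoid wing: I expect that
\[
\lim_{\rho\to\infty}\g_r(\rho)=\int_r^{\infty}\frac{\sinh^{n-1} r}{\sqrt{\sinh^{2n-2}\xi-\sinh^{2n-2} r}}\,d\xi=\frac{\pi}{2n-2},
\]
independently of the neck radius $r$, by the substitution $t=\sinh^{n-1}\xi/\sinh^{n-1} r$ (or $u=(\sinh r/\sinh\xi)^{n-1}$), which reduces the integral to $\frac{1}{n-1}\int_1^\infty \frac{du}{u\sqrt{u^2-1}}=\frac{1}{n-1}\cdot\frac{\pi}{2}$. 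This value $\pi/(2n-2)$ is precisely the threshold in the statement, so the catenoid family provides the natural comparison surface whose maximal vertical span equals the forbidden height.

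Next, assuming for contradiction that a connected compact minimal graph $M=\graph u$ exists with $u|_\alpha=0$, $u|_{\beta^*}=h\geq\pi/(2n-2)$, I would run a sliding/comparison argument. Place an upward-opening catenoid end far enough out so that its boundary lies outside $\Omega$ (below $M$ there) and its neck rises above height $h$; then translate it vertically downward until a first interior contact with $M$. Since both are minimal graphs, the tangential maximum principle (comparison principle for the minimal surface operator on the domain $\Omega$, which is available since $u$ solves the minimal equation in $C^2(\Omega)$) forces $M$ to coincide with a piece of the catenoid near the contact point, and hence globally by unique continuation — but the total height available on any single catenoid wing is strictly less than $\pi/(2n-2)$ as $\rho$ ranges over $[r,\infty)$, so $M$ cannot realize a height gain of $h\geq\pi/(2n-2)$ over a compact domain. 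This contradiction rules out $M$.

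\textbf{The main obstacle} I anticipate is not the contact step itself but making the comparison rigorous across the \emph{annular} topology: because $\partial M=\alpha\cup\beta$ lies in two different slices with $\beta^*$ strictly inside $\alpha$, one must position the catenoid so that it genuinely lies above $M$ on all of $\partial\Omega$ before sliding, which requires checking the boundary heights ($0$ on $\alpha$, $h$ on $\beta$) against the catenoid profile and using that $\beta^*$ projects inside $\alpha$. The delicate point is ensuring the first interior contact is genuinely interior (not a spurious boundary touching) so that the interior maximum principle applies; I would handle this by choosing the catenoid's neck radius and axial position carefully relative to $\delta=\distn(\beta^*,\alpha)$ and invoking the sharp asymptotic $\g_r(\infty)=\pi/(2n-2)$ to exhaust exactly the critical height. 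Once the height bound $h<\pi/(2n-2)$ is forced for any such connected minimal graph, the contrapositive gives the stated nonexistence for $h\geq\pi/(2n-2)$.
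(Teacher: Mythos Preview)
Your overall strategy---contradiction via a maximum-principle comparison with the catenoid family---is the same one the paper uses, but there are two concrete problems with your proposal.

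First, your computation of the catenoid wing height is wrong. You claim
\[
\int_r^{\infty}\frac{\sinh^{n-1} r}{\sqrt{\sinh^{2n-2}\xi-\sinh^{2n-2} r}}\,d\xi=\frac{\pi}{2n-2}
\]
``independently of the neck radius $r$'', but your substitution does not work: the change of variables $\xi\mapsto t=\sinh^{n-1}\xi/\sinh^{n-1} r$ picks up a factor $\cosh\xi$ in $d\xi$, and the integral genuinely depends on $r$. The correct fact (cited in the paper from B\'erard--Sa~Earp) is that $h^+(r)$ is \emph{strictly increasing} in $r$, with $h^+(r)\to 0$ as $r\to 0$ and $h^+(r)\to \pi/(2n-2)$ as $r\to\infty$; in particular $h^+(r)<\pi/(2n-2)$ for every finite $r$. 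You seem to sense this later (``strictly less than''), so your proposal is internally inconsistent. The strict inequality is exactly what makes the contact argument go through, so this must be stated correctly.

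Second, your sliding mechanism is unclear and, as described, does not obviously avoid boundary contact. You want to place an upward-opening catenoid end with ``its neck above height $h$'' and translate it down, but since every wing has height $<\pi/(2n-2)\leq h$, you cannot have the neck above height $h$ while the far end is at height $0$ near $\alpha$. The paper handles this by first replacing $M$ with its reflection $M^*:=-M+h\,e_{n+1}$, so that the outer boundary is now at height $h$ and the inner boundary at height $0$. Then the comparison family is the full catenoid $\ce_r(\varepsilon)=\ce_r+\varepsilon e_{n+1}$: start with $r$ large (so $\ce_r(\varepsilon)$ is disjoint from the compact $M^*$) and \emph{shrink the neck} $r\to 0$. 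Since $h^+(r)$ decreases with $r$, the portion of $\ce_r(\varepsilon)$ above $\Pi_1$ has height $<\pi/(2n-2)\leq h$ and so never reaches $\alpha+h\,e_{n+1}$; and it never touches $\beta^*\subset\Pi_1$ because $\varepsilon>0$. Hence the first contact is interior, contradicting the maximum principle. Your vertical-translation scheme could perhaps be made to work, but the paper's ``shrink the neck'' deformation, together with the preliminary reflection, is what cleanly rules out boundary contact.
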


The main ingredient of the proof of the above result is the fact that the height of an $n$-dimensional half-catenoid in $\hiper^n\times \real$ never exceeds $\pi/(2n-2)$.

This paper is part of Doctoral Thesis of the author \cite{ba}, at Universidade Federal do Rio de Janeiro.

%%%%%%%%%%%%%%%%%
\section{Preliminaries}
\label{sec:prel}

%%%%%%%%%%%%%%%%%%%%%%%%%%%%%%%%%%%%%%%%%
\subsection{Vertical graphs in $\hiper^n\times\real$}
\label{subsec:grvert}

%%%%%%%%%%%%%%%%%%%%%%%%%%%%%%%%%%%%%%

\indent\par In the product manifold $\hiper^n\times\real$, we consider the ball model for the $n$-dimensional hyperbolic space $\hiper^n$. Denoting by $x_1,\ldots,x_n$ the coordinates in $\hiper^n$ and by $t$ the coordinate in $\real$, then we consider that $\hiper^n\times\real$ is the set
$$\{(x_1,\ldots, x_n,t)\in \real^{n+1}; \; x_1^2+ \cdots+ x_n^2<1\},$$
endowed with the product metric
$$d\sigma^2=\frac{dx_1^2+\cdots+dx_n^2}{F}+dt^2,$$
where
$$F=\left(\frac{1-(x_1^2+\cdots+x_n^2)}{2}\right)^2.$$

The knowledge about $n$-dimensional hyperbolic geometry is fundamental for the comprehension of this paper. For this, we recommend, for instance, Chapters 2 and 3 of \cite{st:cours}.

\begin{dfn}
\textbf{(Vertical graph) } Let $\Omega \subset \hiper^n$ be a domain and let $u:\Omega\rightarrow\real$ be a $C^2$ function in $\Omega$. The \textit{vertical graph} of $u$ is the subset of $\hiper^n\times \real$ given by
$$\{(x_1,\ldots,x_n,u(x_1,\ldots,x_n)); (x_1,\ldots,x_n)\in \Omega\}.$$
\label{def:grafvertn}
\end{dfn}

\begin{prop}
{\rm \textbf{(Mean curvature equation in $\hiper^n\times \real$)} } Let $\Omega\subset\hiper^n$ be a domain. The vertical graph of a function $u\in C^2(\Omega)$ has constant mean curvature $H$ if and only if $u$ satisfies the following equation:
$$\diverg \left(\frac{\nabla u}{\tau_u}\right)+\frac{(n-2)\,p\cdot\nabla u}{\tau_u \sqrt{F}}=\frac{nH}{F},$$
where
$$F=\left(\frac{1-\left\|p\right\|^2}{2}\right)^2, \; p=(x_1,\ldots,x_n)\in \Omega,$$
$$\tau_u=\sqrt{1+F\left\|\nabla u\right\|^2},$$
$\cdot$, $\left\|\;\right\|$, $\nabla$ and $\diverg$ are, respectively, the scalar product, the norm, the gradient and the divergence in the Euclidean metric of $\real^n$, and the mean curvature $H$ is obtained with respect to the unit normal vector field $N$ to the graph of $u$ with positive $(n+1)$-th component.
\label{prop:cmchn}
\end{prop}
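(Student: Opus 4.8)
The plan is to parametrize the graph, read off its induced metric and unit normal, and then extract the mean curvature via the first variation of area, finally reorganizing the resulting operator into the stated divergence form.

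First I would write the graph as the immersion $\phi(p)=(p,u(p))$, $p=(x_1,\ldots,x_n)\in\Omega$, so that a coordinate frame is $\phi_i=\partial_i+u_i\,\partial_t$ with $u_i=\partial u/\partial x_i$. Since the ambient metric is the product $\tfrac1F\sum dx_i^2+dt^2$, the induced metric is $g_{ij}=\tfrac1F\delta_{ij}+u_iu_j$, a rank-one perturbation of $F^{-1}\delta_{ij}$. A vector orthogonal to every $\phi_i$ is forced to be proportional to $-F\sum_i u_i\,\partial_i+\partial_t$, and a direct norm computation gives $1+F\|\nabla u\|^2=\tau_u^2$; hence the unit normal with positive $t$-component is $N=\tau_u^{-1}(-F\sum_i u_i\partial_i+\partial_t)$, which fixes the orientation used in the statement.

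For the mean curvature I would use the divergence (first-variation) route, which yields the pointwise identity directly. By the matrix determinant lemma $\det(g_{ij})=F^{-n}\tau_u^2$, so the area element is $F^{-n/2}\tau_u\,dx$ and $A(u)=\int_\Omega F^{-n/2}\sqrt{1+F\|\nabla u\|^2}\,dx$, with $F=F(p)$ depending on position only. Differentiating this Lagrangian in $\nabla u$ and comparing the resulting Euler--Lagrange operator with the first variation $\delta A=-\int_\Omega nH\,g(X,N)\,dA$ under a vertical variation field $X=\varphi\,\partial_t$ (whose normal speed is $\tau_u^{-1}\varphi$), I obtain
$$\sum_i\partial_i\!\left(\frac{u_i}{F^{(n-2)/2}\,\tau_u}\right)=\frac{nH}{F^{n/2}},$$
where at this stage $H=H(p)$ is the a priori non-constant mean curvature function of the graph. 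This makes the asserted equivalence transparent: the graph has constant mean curvature $H$ precisely when $H(p)$ equals that constant, i.e. when $u$ solves the displayed PDE.

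It remains to put this in the stated form, which is the main computational step. Using $\sqrt F=\tfrac12(1-\|p\|^2)$, hence $\partial_i\sqrt F=-x_i$ and $\nabla F=-2\sqrt F\,p$, the conformal-factor terms collapse exactly into the first-order term. Indeed, multiplying through by $F^{(n-2)/2}$ turns $\sum_i\partial_i(u_iF^{-(n-2)/2}\tau_u^{-1})$ into $\diverg(\nabla u/\tau_u)-\tfrac{n-2}{2}\,\tfrac{\nabla u\cdot\nabla F}{F\,\tau_u}$ and the right side into $nH/F$; substituting $\nabla F=-2\sqrt F\,p$ then gives $\diverg(\nabla u/\tau_u)+\tfrac{(n-2)p\cdot\nabla u}{\tau_u\sqrt F}=nH/F$, as claimed. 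The only real obstacle is bookkeeping: keeping the powers of $F$ and the weight $F^{-(n-2)/2}$ straight so that the coefficient $n-2$ and the factor $1/\sqrt F$ emerge correctly; the geometric content is routine once the product and conformal structure of the metric are exploited. As an independent check one may compute $H=\tfrac1n g^{ij}h_{ij}$ directly, with $g^{ij}=F\delta_{ij}-F^2u_iu_j/\tau_u^2$ from the Sherman--Morrison formula and the ambient Christoffel symbols $\Gamma^k_{ij}=\delta^k_i\omega_j+\delta^k_j\omega_i-\delta_{ij}\omega_k$, $\omega_i=x_i/\sqrt F$, of the conformal hyperbolic factor (all Christoffels involving $t$ vanishing); this reproduces the same identity and hence the equivalence.
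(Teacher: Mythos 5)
Your proof is correct. The induced metric $g_{ij}=F^{-1}\delta_{ij}+u_iu_j$, the upward normal $N=\tau_u^{-1}\bigl(-F\sum_i u_i\partial_i+\partial_t\bigr)$, the determinant $\det(g_{ij})=F^{-n}\tau_u^2$, and the Euler--Lagrange identity $\sum_i\partial_i\bigl(u_iF^{-(n-2)/2}\tau_u^{-1}\bigr)=nH\,F^{-n/2}$ all check, and your conformal bookkeeping with $\sqrt F=\tfrac12(1-\|p\|^2)$, $\nabla F=-2\sqrt F\,p$ reduces this exactly to the stated equation; keeping $H=H(p)$ pointwise until the last step also yields the ``if and only if'' cleanly. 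Note, however, that the paper gives no in-text proof: it defers to ``classical procedures'' as in \cite{nr} (p. 264--265, case $n=2$) and to Appendix A of the thesis \cite{ba} for general $n$, where the computation is carried out directly via the second fundamental form and the Christoffel symbols of the conformal factor --- in essence your closing ``independent check'' ($g^{ij}=F\delta_{ij}-F^2u_iu_j/\tau_u^2$, $\Gamma^k_{ij}=\delta^k_i\omega_j+\delta^k_j\omega_i-\delta_{ij}\omega_k$ with $\omega_i=x_i/\sqrt F$) is the cited route. Your primary argument through the area functional $A(u)=\int_\Omega F^{-n/2}\tau_u\,dx$ and its first variation against vertical fields is a genuinely different and arguably cleaner derivation: it bypasses the curvature computation entirely, produces the equation already in the divergence form that the paper actually exploits afterwards (ellipticity and Theorem 10.7 of \cite{gt}), and makes transparent that the left-hand side is just $\diverg_{\hiper^n}\!\bigl(\nabla^{\hiper^n}u/\tau_u\bigr)$ written in conformal coordinates. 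The price is the variational input: you must invoke the first variation of area $\delta A=-\int nH\,g(X,N)\,dA$, restrict to compactly supported test functions $\varphi$, and apply the fundamental lemma of the calculus of variations to pass from the integral identity to the pointwise one --- standard, but worth stating explicitly, and it is exactly the step the direct computation of \cite{nr} and \cite{ba} avoids.
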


The proof of Proposition \ref{prop:cmchn} follows classical procedures, similar to the particular case $n=2$ in \cite{nr}, p. 264-265. The reader interested in the calculation for the general case $n\geq 2$ can see, for instance, the Appendix A in the thesis \cite{ba}, p. 77-88.

\indent\par Given $H\in\real$, we define the operator $Q_H$ by
$$Q_H(u):=\diverg \left(\frac{\nabla u}{\tau_u}\right)+\frac{(n-2)\,p\cdot\nabla u}{\tau_u \sqrt{F}}-\frac{nH}{F}, \; \; u\in C^2(\Omega),$$
where $\Omega$ is a domain in $\hiper^n\times\{0\}$. It follows from Proposition \ref{prop:cmchn} that the vertical graph of a function $u\in C^2(\Omega)$ has constant mean curvature $H$ if and only if $u$ satisfies the equation $Q_H \,u=0$.

For the particular case $H=0$, we have:
\begin{equation}
Q_0(u):=\diverg \left(\frac{\nabla u}{\tau_u}\right)+\frac{(n-2)\,p\cdot\nabla u}{\tau_u \sqrt{F}}, \; \; u\in C^2(\Omega).
\label{eq:q0n}
\end{equation}
Then, the vertical graph of a function $u\in C^2(\Omega)$ is minimal if and only if $u$ satisfies the equation $Q_0 \,u=0$.

Observe that the above operator is elliptic, in the divergence form. We can write
$$Q_0(u)=\diverg \left(\mathbf{A}(p,u,\nabla u)\right)+B(p,u,\nabla u), \; \; u\in C^2(\Omega),$$
where
$$\mathbf{A}(p,u,\nabla u)= \frac{\nabla u}{\tau_u}$$ and $$B(p,u,\nabla u)=\frac{(n-2)\,p\cdot\nabla u}{\tau_u \sqrt{F}}$$
are continuously differentiable functions with respect to the variable $\nabla u$ and independent of the variable $u$ (They only depend on $p$ and $\nabla u$). Consequently, we can use Theorem 10.7 in \cite{gt} to state the Maximum Principle for the minimal hypersuface equation $Q_0 \,u=0$.

\begin{thm}
{\rm \textbf{(Maximum Principle) }} Let $\Omega \subset \hiper^n$ be a bounded domain, where $\partial \Omega$ denotes the boundary of $\Omega$. Let $f_1, \, f_2:\partial \Omega \rightarrow \real$ be continuous functions satisfying $f_1\leq f_2$. Let $u_i:\bar{\Omega}\rightarrow \real$ be a continuous extension of $f_i$ satisfying the minimal hypersurface equation $Q_0 \,u=0$ on $\Omega$, $i=1, \, 2$. Then we have $u_1\leq u_2$ on $\Omega$.
\label{teo:pmaxn}
\end{thm}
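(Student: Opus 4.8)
The plan is to recognize the claim as the comparison (weak maximum) principle for quasilinear elliptic operators in divergence form and to deduce it from Theorem 10.7 of \cite{gt}, exactly as announced just before the statement. Recall that the minimal hypersurface operator has been written as
$$Q_0(u)=\diverg\left(\mathbf{A}(p,\nabla u)\right)+B(p,\nabla u),$$
with $\mathbf{A}(p,\nabla u)=\nabla u/\tau_u$ and $B(p,\nabla u)=(n-2)\,p\cdot\nabla u/(\tau_u\sqrt{F})$, where $\tau_u=\sqrt{1+F\|\nabla u\|^2}$. The decisive structural feature, which I would highlight first, is that both $\mathbf{A}$ and $B$ depend only on the point $p$ and on the gradient $\nabla u$, and not on the value of $u$ itself.

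First I would verify the hypotheses of the cited comparison principle. Writing $q=\nabla u$ for the gradient variable, a direct computation gives
$$\frac{\partial \mathbf{A}^i}{\partial q_j}=\frac{1}{\tau_u}\left(\delta_{ij}-\frac{F\,q_i q_j}{\tau_u^2}\right),$$
a symmetric positive definite matrix for every $q$; this is the ellipticity already noted in the excerpt. Since $\mathbf{A}$ and $B$ are continuously differentiable in $q$, the operator meets the smoothness requirement of Theorem 10.7. Crucially, because neither $\mathbf{A}$ nor $B$ contains the variable $u$, the monotonicity hypothesis on the dependence on that variable, namely that $B$ be non-increasing in $u$, holds trivially, all $u$-derivatives being zero. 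In particular, the linearization of $Q_0$ will produce a linear elliptic operator with no zeroth order term, so no sign condition is needed.

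Then I would apply Theorem 10.7. By hypothesis $u_1$ and $u_2$ both solve $Q_0 u=0$ in $\Omega$, hence $Q_0 u_1=Q_0 u_2$ there, while on the boundary $u_1=f_1\leq f_2=u_2$. Both functions lie in $C^0(\bar{\Omega})\cap C^2(\Omega)$, which is the admissible class for the principle. The desired conclusion $u_1\leq u_2$ in $\Omega$ then follows at once.

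The step I expect to require the most care is the passage, internal to Theorem 10.7, from the quasilinear equality $Q_0 u_1=Q_0 u_2$ to a linear differential inequality for $w=u_1-u_2$. One writes $\mathbf{A}(p,\nabla u_1)-\mathbf{A}(p,\nabla u_2)$ as an integral of $\partial\mathbf{A}/\partial q$ over the segment from $\nabla u_2$ to $\nabla u_1$, and similarly for $B$, obtaining coefficients $a^{ij}(p)=\int_0^1(\partial\mathbf{A}^i/\partial q_j)\,dt$ that stay positive definite at each point because they average positive definite matrices; this is exactly where the ellipticity of $\partial\mathbf{A}^i/\partial q_j$ and the absence of a $u$-dependence are used. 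Once $w$ is seen to satisfy a linear elliptic inequality with no zeroth order coefficient and $w\leq 0$ on $\partial\Omega$, the classical maximum principle forces $w\leq 0$, that is $u_1\leq u_2$, throughout $\Omega$.
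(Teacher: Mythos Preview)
Your proposal is correct and follows exactly the approach of the paper, which simply refers the reader to Theorem 10.7 of \cite{gt}; you have merely spelled out the verification of the hypotheses (ellipticity of $\partial\mathbf{A}^i/\partial q_j$, $C^1$ dependence on the gradient, and independence of $u$) and the linearization step that are implicit in that citation.
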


\begin{proof}
See, for instance, Theorem 10.7 of \cite{gt}, p. 268-271.
\end{proof}

As a consequence of Theorem \ref{teo:pmaxn}, setting $f_1=f_2$, there exists at most one continuous extension of $f_1$ on $\bar{\Omega}$ satisfying the minimal hypersurface equation $Q_0 \,u=0$ on $\Omega$.

%%%%%%%%%%%%%%%%%%%%%%%%%%%%%%
\subsection{Examples of minimal vertical graphs in $\hiper^n\times\real$}
\label{subsec:exgrmin}

%%%%%%%%%%%%%%%%%%%%%%%%%%%%

\indent\par For the sake of clearness, we present in this subsection some useful examples of minimal vertical graphs in $\hiper^n\times\real$ found in the literature.\\

\noindent \textbf{A) Rotational minimal vertical graphs in $\hiper^n\times \real$}

\indent\par The papers \cite{nr}, \cite{st:screw}, \cite{nsst} and \cite{bs} describe the construction of rotational minimal hypersurfaces generated around a vertical geodesic axis in $\hiper^n\times\real$ ($n$-dimensional catenoids and slices). The first three papers deal with the particular case $n=2$ and the last one generalizes it for $n\geq 2$.

More precisely, given $r>0$, the function
\begin{equation}
\g_r(\rho)=\int_r^{\rho} \frac{\sinh^{n-1} r}{\sqrt{\sinh^{2n-2}\xi-{\sinh}^{2n-2} r}} \; d\xi, \;\; \rho\in[r,+\infty)
\label{eq:gr}
\end{equation}
define the generator curve of an $n$-dimensional half-catenoid in $\hiper^n\times\real$, where $\rho$ represents the (hyperbolic) horizontal distance with respect to the rotational axis, see Proposition 3.2 in \cite{bs}.

Fixed a point $c_0\in \hiper^n$, we denote by $\rho_{c_0}(q)$ the hyperbolic distance from a point $q\in \hiper^n$ to $c_0$, that is,
$$\rho_{c_0}(q):=\distn(q,c_0).$$

Observe that the function $\rho \mapsto \g_r(\rho)$ is continuous and increasing on $[r,+\infty)$ and that an $n$-dimensional half-catenoid with the generator curve defined by $\g_r$ and the rotation axis $c_0 \times \real$ in $\hiper^n\times\real$ is the vertical graph of the function
\begin{equation}
\g_{c_0,r}(q)=\int_r^{\rho_{c_0}(q)} \frac{\sinh^{n-1} r}{\sqrt{\sinh^{2n-2}\xi-{\sinh}^{2n-2} r}} \; d\xi,
\label{eq:gcr}
\end{equation}
over the set $\{q\in \hiper^n\times\{0\}; \;\; \rho_{c_0}(q)\geq r\}$, that is, over the exterior domain to the hyperbolic $(n-1)$-sphere with center $c_0$ and radius $r$ in $\hiper^n\times\{0\}$.

Also by Proposition 3.2 in \cite{bs}, the half-catenoid defined by $\g_{c_0,r}$ has finite vertical height
$$h^+(r)=\int_r^{+\infty} \frac{\sinh^{n-1} r}{\sqrt{\sinh^{2n-2}\xi-{\sinh}^{2n-2} r}} \; d\xi.$$

In addition, the function $r\mapsto h^+(r)$ increases from $0$ to ${\displaystyle \frac{\pi}{2(n-1)}}$, when $r$ increases from $0$ to $+\infty$. This means that the height of an $n$-dimensional half-catenoid  in $\hiper^n\times\real$ never exceeds ${\displaystyle \frac{\pi}{2(n-1)}}$. Moreover, given $r_1,\,r_2>0$, with $r_1\neq r_2$, the generator curves $\g_{r_1}$ and $\g_{r_2}$ intersect at one unique point.

If $r_0>r$, then $\g_r(r_0)$ represents the height of the $n$-dimensional catenoid part which is vertical graph of $\g_{c_0,r}$ over the closed domain of $\hiper^n\times\{0\}$ bounded by hyperbolic $(n-1)$-spheres with radii $r$ and $r_0$ and common center $c_0$.

A geometric description with more details about the $n$-dimensional catenoids in $\hiper^n\times\real$ can be seen in Subsections 3.2 and 3.3 of \cite{bs}.\\

\noindent \textbf{B) Scherk type minimal vertical graphs in $\hiper^n\times\real$}

\indent\par In \cite{nr}, B. Nelli and H. Rosenberg have proved the existence of minimal graphs in $\hiper^2\times\real$, defined over geodesic triangles in $\hiper^2$, with infinite boundary data on one of the sides of the triangle and zero value boundary data on the other two sides. More precisely, they have proved the following:

\begin{thm}
{\rm \textbf{(Theorem 2 in \cite{nr})} } Let $\Delta$ be a geodesic triangle in $\hiper^2$ with sides $A$, $B$ and $C$.
Then there exists a function $u$, solution of the minimal equation $Q_0=0$, defined in $\Delta\setminus A$, which satisfies
$$\left.u\right|_{\interior (B\cup C)}=0, \mbox{ } \lim_{q\rightarrow \interior A}u(q)=+\infty.$$
Moreover, $|\nabla u(q)|\rightarrow +\infty$ when $q$ approaches the side  $A$.
\label{teo:nrscherk}
\end{thm}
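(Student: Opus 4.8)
The plan is to realize $u$ as the monotone limit of solutions of bounded Dirichlet problems, in the spirit of the Jenkins--Serrin construction. For each $m\in\nat$, let $u_m$ be the solution of $Q_0 u=0$ in $\Delta$ with boundary data $u_m=m$ on $\interior A$ and $u_m=0$ on $\interior(B\cup C)$, where $p_1=A\cap B$ and $p_2=A\cap C$ are the two vertices at which the data jumps. Such $u_m$, continuous up to the interior of each side and satisfying $0\le u_m\le m$, is produced by the Perron method for the minimal equation over the convex domain $\Delta$ with bounded continuous data, the corners $p_1,p_2$ being removable since the data is bounded. By the Maximum Principle (Theorem \ref{teo:pmaxn}), comparison of boundary data gives $u_m\le u_{m+1}$, so $(u_m)$ is a nondecreasing sequence of nonnegative solutions.

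First I would prove that $(u_m)$ is uniformly bounded on compact subsets of $\Delta\setminus A$. Since $n=2$, the middle term of \eqref{eq:q0n} vanishes and the equation reads $\diverg(\nabla u_m/\tau_{u_m})=0$; geometrically this says that the horizontal projection of the unit normal to the graph of $u_m$ is a divergence-free field of hyperbolic norm strictly less than $1$. Its flux is therefore conserved and, across any arc, bounded in absolute value by the hyperbolic length of the arc, with the bound attained in the limit only where the graph becomes vertical. Writing the vanishing of the total flux over $\partial\Delta$ and letting $m\to\infty$, the contribution along $A$ tends to $\ell(A)$ (the graphs lift off to $+\infty$ there), while the contributions along $B$ and $C$ are controlled by $\ell(B)$ and $\ell(C)$. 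The strict hyperbolic triangle inequality $\ell(A)<\ell(B)+\ell(C)$ then rules out the degenerate alternative in which $u_m\to+\infty$ on all of $\Delta$, and yields the desired uniform upper bound on compacta away from $A$, together with upper barriers independent of $m$ near $\interior(B\cup C)$. This flux balance is the main obstacle: it is the step that genuinely uses the geometry of the geodesic triangle, and it is precisely where the construction would fail for a general polygon violating the Jenkins--Serrin length condition.

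Granting the local bound, the interior gradient estimates for the minimal equation (see \cite{gt}) give $C^2_{\mathrm{loc}}$ precompactness, so the monotone sequence converges in $C^2_{\mathrm{loc}}(\Delta)$ to a function $u$ solving $Q_0u=0$ on $\Delta\setminus A$. The boundary values are then read off by comparison. On $\interior(B\cup C)$ one has $0\le u$, and the $m$-independent upper barriers vanishing on these sides force $u$ to extend continuously by $0$, so $\left.u\right|_{\interior(B\cup C)}=0$. Along $\interior A$, monotonicity gives $u\ge u_m$, and since $u_m$ is continuous up to $\interior A$ with value $m$, one obtains $\liminf_{q\to\interior A}u(q)\ge m$ for every $m$, whence $\lim_{q\to\interior A}u(q)=+\infty$.

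Finally, to see that $|\nabla u(q)|\to+\infty$ as $q$ approaches $A$, I would invoke the flux once more: the flux of $\nabla u/\tau_{u}$ across curves converging to $A$ realizes the extremal value $\ell(A)$, which forces the hyperbolic norm of the field to tend to $1$, i.e. $\tau_u\to+\infty$, so the graph becomes vertical and $|\nabla u|$ blows up along $A$. Alternatively, if $|\nabla u|$ stayed bounded near $A$ then $u$ would extend continuously and finitely to $\interior A$, contradicting $u\to+\infty$. This completes the proposed argument.
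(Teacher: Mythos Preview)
The paper does not supply its own proof of this statement; it simply refers the reader to Theorem~2 of \cite{nr} (and, alternatively, to Example~4.3 of \cite{st:asymp}). Your proposal is a correct outline of precisely the Jenkins--Serrin type argument carried out in \cite{nr}: solve the bounded Dirichlet problems with data $m$ on $A$ and $0$ on $B\cup C$, use the flux of $\nabla u_m/\tau_{u_m}$ together with the strict triangle inequality $\ell(A)<\ell(B)+\ell(C)$ to obtain uniform bounds on compacta of $\Delta\setminus A$, and pass to the monotone limit via interior gradient estimates. So your approach coincides with that of the cited reference.
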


\begin{proof}
See, for instance, Theorem 2 in \cite{nr}, p. 269-273. (A alternative proof is given in \cite{st:asymp}, Example 4.3, p. 326.)
\end{proof}

The vertical graph of $u$ determined by the above theorem (as well as any vertical graph which is isometric to it) will be called a \textit{Scherk type minimal graph in $\hiper^2\times\real$}.

\indent\par In \cite{st:hnxr}, R. Sa Earp and E. Toubiana have proved the existence of Scherk type minimal graphs in $\hiper^n\times\real$, for $n\geq 3$, that is, of minimal vertical graphs, defined over certain bounded domains of $\hiper^n$, which assume infinite boundary data on certain parts of this boundary. (Theorems 5.1, 5.3 and 5.4 in \cite{st:hnxr}.)

We shall present the Scherk type minimal graph presented in Theorem 5.1 in \cite{st:hnxr}, after the two following definitions.

\begin{dfn}
Let $\Sigma\subset\hiper^n$ be a $C^0$ orientable hypersurface in $\hiper^n$.

\begin{enumerate}

\item We say that $\Sigma$ is \textit{convex in the hyperbolic sense} if, at each point $p\in\Sigma$, $\Sigma$ is contained in one of the closed halfspaces determined by some geodesic hyperplane of $\hiper^n$ passing through $p$.
	
\item We say that $\Sigma$ is \textit{strictly convex in the hyperbolic sense} if, at each point $p\in\Sigma$, $\Sigma$ is contained in one of the closed halfspaces determined by some geodesic hyperplane $\Pi_p$ of $\hiper^n$ passing through $p$ and, moreover, $\Sigma\cap\Pi_p=\{p\}$.

\end{enumerate}

\label{def:hipersupconvex}
\end{dfn}

\begin{dfn}
\textbf{(Definition 5.1 in \cite{st:hnxr} - Special rotational domain) } Let $\gamma, L\subset \hiper^n$ be, $n\geq 3$, two complete geodesics such that $L$ is orthogonal to $\gamma$ at some point $B\in \gamma\cap L$.

Using the half-space model for $\hiper^n$, we can assume, without loss of generality, that $\gamma$ is the vertical geodesic such that $\partial_{\infty} \gamma=\{0,\infty\}$.

We call $P\subset\hiper^n$ the geodesic two-plane containing $L$ and $\gamma$. We choose points $A_0\in(0,B)\subset\gamma$ and $A_1\in L\setminus\gamma$ and we denote by $\alpha\subset P$ the Euclidean segment joining $A_0$ and $A_1$.

Therefore the hypersurface $\Sigma$, generated by rotating $\alpha$ with
respect to $\gamma$, has the following properties:
\begin{enumerate}
	\item $\interior(\Sigma)$ is smooth except at point $A_0$;
	\item $\Sigma$ is strictly convex in hyperbolic meaning and convex in Euclidean meaning;
	\item $\interior(\Sigma)\setminus\{A_0\}$ is transversal to the Killing field generated by the translations along $\gamma$.
\end{enumerate}

Consequently $\Sigma$ lies in the mean convex side of the domain of $\hiper^n$ whose boundary is the hyperbolic cylinder with axis $\gamma$ and passing through $A_1$.

Let us call $\Pi\subset\hiper^n$ the geodesic hyperplane orthogonal to $\gamma$ and passing through $B$. Observe that the boundary of $\Sigma$ is a $(n-2)$-dimensional geodesic sphere of $\Pi$ centered at $B$.

We denote by $U_{\Sigma}\subset\Pi$ the open geodesic ball centered at $B$ whose boundary is the boundary of $\Sigma$. We call $\de_{\Sigma}\subset\hiper^n$ the closed domain whose boundary is $U_{\Sigma}\cup\Sigma$. Observe that $\partial\de_{\Sigma}$ is strictly convex at any point of $\Sigma$ and convex at any point of $U_{\Sigma}$. Such a domain will be called a \textit{special rotational domain}.
\label{def:dre}
\end{dfn}

\begin{thm}
{\rm \textbf{(Theorem 5.1 in \cite{st:hnxr})} } Let $\de_{\Sigma}\subset\hiper^n$ be a special rotational domain. There is a unique solution $v_{\infty}$ of the minimal equation $Q_0=0$ in $\interior(\de_{\Sigma})$, which extends continuously to $\interior(\Sigma)$ taking prescribed zero boundary value data and taking boundary value $+\infty$ for any approach to $U_{\Sigma}$.
More precisely, the following Dirichlet problem {\rm\textbf{(P$_{\infty}$)}} admits a unique solution $v_{\infty}$:
$$
\mbox{{\rm\textbf{(P$_{\infty}$)}}}\left\{\begin{array}{l}
Q_0(u)=0 \mbox{ in } \interior(\de_{\Sigma}),\\
u=0 \mbox{ on } \interior(\Sigma),\\
u=+\infty \mbox{ on } U_{\Sigma},\\
u\in C^2(\interior(\de_{\Sigma}))\cap C^0(\de_{\Sigma}\setminus\overline{U_{\Sigma}}).	
\end{array}\right.
$$
\label{teo:scherkrot}
\end{thm}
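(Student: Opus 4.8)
The plan is to obtain $v_{\infty}$ as a monotone limit of solutions of the Dirichlet problems with \emph{finite} data on $U_{\Sigma}$, in the spirit of the Jenkins--Serrin technique. For each integer $k\ge 1$ I would first solve the problem $(P_k)$ given by $Q_0(u)=0$ in $\interior(\de_{\Sigma})$, $u=0$ on $\interior(\Sigma)$ and $u=k$ on $U_{\Sigma}$. Existence of a solution $u_k\in C^2(\interior(\de_{\Sigma}))$, continuous up to $\interior(\Sigma)$ and $\interior(U_{\Sigma})$, follows from the Perron process: the constant $0$ is a lower barrier (the slice solves $Q_0=0$); an upper barrier at each $p\in\interior(\Sigma)$ is furnished by the strict hyperbolic convexity of $\Sigma$ and its supporting geodesic hyperplanes, together with a rotational half-catenoid \eqref{eq:gcr} placed on the mean-convex side; and a barrier at each $p\in\interior(U_{\Sigma})$ comes from the fact that $U_{\Sigma}$ lies in the totally geodesic hyperplane $\Pi$, which is convex. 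The corner $\partial\Sigma=\partial U_{\Sigma}$ need not be attained, since the solution is only required to be continuous on $\de_{\Sigma}\setminus\overline{U_{\Sigma}}$.

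By the Maximum Principle (Theorem \ref{teo:pmaxn}) the sequence is ordered, $0\le u_k\le u_{k+1}$, because the data are ordered. The delicate point is to bound the $u_k$ from above, uniformly in $k$, on compact subsets of $\interior(\de_{\Sigma})$. For this I would construct a single supersolution $\overline{W}$ on $\de_{\Sigma}$ that is finite in the interior, satisfies $\overline{W}\ge 0$ on $\Sigma$, and blows up to $+\infty$ along $U_{\Sigma}$. Granting such $\overline{W}$, the Maximum Principle gives $u_k\le\overline{W}$ for every $k$ (the inequality $u_k=k\le\overline{W}=+\infty$ on $U_{\Sigma}$ being trivial), so that $v_{\infty}:=\lim_k u_k$ is finite in $\interior(\de_{\Sigma})$. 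I expect the construction of $\overline{W}$ to be the main obstacle: it should exploit the rotational symmetry of $\de_{\Sigma}$ about $\gamma$ and the totally geodesic character of $\Pi$, producing near $U_{\Sigma}$ a barrier asymptotic to the vertical hyperplane $\Pi\times\real$ (a one-sided Scherk profile) and patching it, away from $U_{\Sigma}$, with bounded rotational barriers (slices and half-catenoids).

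Once $v_{\infty}$ is finite in the interior, interior gradient and compactness estimates for the minimal hypersurface equation (\cite{gt}) upgrade the monotone convergence to $C^2$ convergence on compact subsets, whence $Q_0(v_{\infty})=0$ in $\interior(\de_{\Sigma})$. To identify the boundary values I would argue as follows. On $\interior(\Sigma)$, $v_{\infty}$ is now a fixed finite solution, so a local upper barrier equal to $0$ on $\Sigma$ and chosen large enough to dominate $v_{\infty}$ on the inner face of a collar (again from the strict convexity of $\Sigma$) squeezes $0\le v_{\infty}\le\overline{w}$ and forces $v_{\infty}\to 0$. On $\interior(U_{\Sigma})$, fix $p$ and $M>0$; choosing $k>M$ and using that $u_k$ is continuous at $p$ with value $k$, there is a neighbourhood of $p$ on which $u_k>M$, hence $v_{\infty}\ge u_k>M$ there; as $M$ is arbitrary, $v_{\infty}\to+\infty$ at $p$. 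This exhibits a solution of $(P_{\infty})$ in $C^2(\interior(\de_{\Sigma}))\cap C^0(\de_{\Sigma}\setminus\overline{U_{\Sigma}})$.

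For uniqueness, suppose $v$ and $\tilde v$ both solve $(P_{\infty})$. Since Theorem \ref{teo:pmaxn} is stated for bounded data, they cannot be compared directly along $U_{\Sigma}$, where both are infinite. I would instead use a flux (divergence-theorem) argument of Collin--Krust type: applying the divergence theorem to the vector field $\mathbf{A}(\nabla v)-\mathbf{A}(\nabla\tilde v)$ over the region where $v>\tilde v$, the boundary integral over $\Sigma$ vanishes because $v=\tilde v=0$ there, while the contribution coming from $U_{\Sigma}$ vanishes because both functions tend to $+\infty$ with asymptotically equal gradients; ellipticity of $Q_0$ then forces $v\equiv\tilde v$. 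Thus the two principal difficulties are the construction of the global blow-up supersolution $\overline{W}$ and the control of the flux at $U_{\Sigma}$ in the uniqueness step, both of which I would resolve using the rotational symmetry of the special rotational domain and the totally geodesic hyperplane $\Pi$ containing $U_{\Sigma}$.
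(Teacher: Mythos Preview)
The paper does not supply its own proof of this statement: the entire proof reads ``See Theorem 5.1 (and also Proposition 5.1) in \cite{st:hnxr}.'' The result is quoted from Sa Earp--Toubiana and used as a black box, so there is nothing substantive in the present paper to compare your argument against.

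That said, your outline is in the spirit of the original proof in \cite{st:hnxr}: one solves the finite-data problems $(P_k)$ (there via a Perron process on the strictly convex domain $\de_{\Sigma}$, cf.\ their Proposition 5.1), obtains monotonicity from the Maximum Principle, and passes to the limit using interior estimates. You correctly identify the two genuine issues---a global supersolution $\overline{W}$ blowing up along $U_{\Sigma}$ to cap the sequence, and the uniqueness step when both competitors are infinite on $U_{\Sigma}$---and you correctly locate the mechanism (rotational symmetry about $\gamma$ and the fact that $U_{\Sigma}$ sits in the totally geodesic hyperplane $\Pi$) that makes both constructions work. One small caution: for the barriers at points of $\interior(U_{\Sigma})$ in your $(P_k)$ step, note that $\Omega=\interior(\de_{\Sigma})$ lies on the \emph{strictly convex} side of $\Pi$ (indeed $\de_{\Sigma}$ lies in a half-space bounded by $\Pi$), so the convexity barrier from Remark~\ref{obs:barreiras} applies directly; your phrasing ``$U_{\Sigma}$ lies in the totally geodesic hyperplane $\Pi$, which is convex'' could be sharpened to make clear that it is the domain, not merely the boundary piece, that is convex at those points.
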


\begin{proof}
See Theorem 5.1 (and also Proposition 5.1) in \cite{st:hnxr}.
\end{proof}

The vertical graph of $v_{\infty}$ determined by the above theorem (as well as any vertical graph which is isometric to it) will be called a \textit{rotational Scherk hypersurface in $\hiper^n\times\real$}.
\\

\noindent \textbf{C) Minimal vertical graphs foliated by horocycles in $\hiper^2\times\real$}

\indent\par In the paper \cite{d}, B. Daniel has exhibited a minimal surface in $\hiper^2\times \real$ such that each horizontal curve is a horocycle in the slice where it lies, in such a way that the vertical projections of these horocycles on the slice $\hiper^2\times \{0\}$ have the same asymptotic point.

Before we present precisely the above result, we observe that the mentioned paper has considered the Minkowsky model for $\hiper^2$, that is, the hyperboloid
$$\hiper^2_{\var}=\{(x_0,x_1,x_2)\in\lo^3; \; -x_0^2+x_1^2+x_2^2=-1, \; x_0>0\},$$
endowed with the quadratic form
$$g_{\var}=-(dx_0)^2+(dx_1)^2+(dx_2)^2.$$
($\lo^3$ denotes the three-dimensional Lorentz space, that is, the space $\real^3$ endowed with the form $g_{\var}$.)

\begin{prop}
{\rm \textbf{(Proposition 4.17 in \cite{d})} } The map
\begin{equation}
\chi(u,v)=\left(\frac{v^2+1}{2 \cos u}+\frac{\cos u}{2},\;\frac{v}{\cos u},\;\frac{v^2-1}{2 \cos u}+\frac{\cos u}{2},\;u\right),
\label{eq:c0m}
\end{equation}
defined for ${\displaystyle (u,v)\in \left(-\frac{\pi}{2},\frac{\pi}{2}\right)\times\real}$, is a conformal minimal embedding in $\hiper^2_{\var}\times\real$, such that the curves $u=u_0$, ${\displaystyle u_0\in\left(-\frac{\pi}{2},\frac{\pi}{2}\right)}$, are horocycles in $\hiper^2_{\var}\times\{u_0\}$ such that its vertical projections over the slice $\hiper^2_{\var}\times\{0\}$ have the same asymptotic point. We will denote this surface by $\ca_0$.

Morover, the surface $\ca_0$ is the unique one (up to isometries of $\hiper^2_{\var}\times\real$) having this property.
\label{prop:horomin}
\end{prop}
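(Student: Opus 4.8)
The plan is to establish the asserted properties of the map $\chi$ in \eqref{eq:c0m} by direct computation, and then to prove uniqueness by a symmetry reduction to an ODE.

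First I would check that $\chi$ takes values in $\hiper^2_{\var}\times\real$. Writing $\phi=(\chi_0,\chi_1,\chi_2)$ for the first three coordinates, one has $\chi_0-\chi_2=\sec u$ and $\chi_0+\chi_2=v^2\sec u+\cos u$, whence
$$-\chi_0^2+\chi_1^2+\chi_2^2=-(\chi_0-\chi_2)(\chi_0+\chi_2)+\chi_1^2=-1,$$
while $\chi_0>0$ because $\cos u>0$ on $(-\pi/2,\pi/2)$; hence $\phi(u,v)\in\hiper^2_{\var}$ and $\chi=(\phi,u)$ lands in the product. Injectivity is immediate (recover $u$ from the last coordinate and then $v$ from $\chi_1=v\sec u$), and checking that $\chi_u,\chi_v$ are linearly independent shows $\chi$ is an embedding. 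For conformality I would compute the first fundamental form $\chi^*(g_{\var}+dt^2)$: from $\phi_v=\sec u\,(v,1,v)$ one gets $G=\langle\phi_v,\phi_v\rangle_{\var}=\sec^2u$ and $F=\langle\phi_u,\phi_v\rangle_{\var}=0$, and a short computation gives $\langle\phi_u,\phi_u\rangle_{\var}=\tan^2u$, so that $E=\tan^2u+1=\sec^2u=G$. Thus $\chi$ is conformal with factor $\lambda^2=\sec^2u$.

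For minimality I would use that a conformal immersion into a Riemannian product is minimal if and only if it is harmonic. The height component $t=u$ is linear, hence harmonic, so it remains to show that $\phi$ is a harmonic map into $\hiper^2_{\var}$. Since $\langle\phi,\phi\rangle_{\var}=-1$, the position vector $\phi$ is $g_{\var}$-orthogonal to the tangent space and is the unit normal, so the Gauss formula for $\hiper^2_{\var}\subset\lo^3$ yields
$$\phi_{uu}+\phi_{vv}=(\nabla^{\hiper}_{\phi_u}\phi_u+\nabla^{\hiper}_{\phi_v}\phi_v)+(\langle\phi_u,\phi_u\rangle_{\var}+\langle\phi_v,\phi_v\rangle_{\var})\,\phi,$$
in which the first group of terms is $\lambda^2$ times the tension field of $\phi$. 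Hence $\phi$ is harmonic exactly when $\phi_{uu}+\phi_{vv}$ is a multiple of $\phi$, and a direct computation confirms $\phi_{uu}+\phi_{vv}=(2\sec^2u-1)\,\phi$; this is consistent, since the normal coefficient must equal $\langle\phi_u,\phi_u\rangle_{\var}+\langle\phi_v,\phi_v\rangle_{\var}=\tan^2u+\sec^2u$. Therefore $\chi$ is a minimal embedding. To identify the slices $u=u_0$ as horocycles, note that along such a slice $\langle\phi,\omega\rangle_{\var}=\chi_0-\chi_2=\sec u_0$ is constant, where $\omega=(-1,0,-1)$ satisfies $\langle\omega,\omega\rangle_{\var}=0$; thus the slice lies in a null affine plane, cutting $\hiper^2_{\var}$ in a horocycle whose asymptotic point is the one determined by the null ray $\real_{>0}\,\omega$. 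Since $\omega$ is independent of $u_0$, all these horocycles, and their vertical projections onto $\hiper^2_{\var}\times\{0\}$, share the same asymptotic point.

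The main obstacle is the uniqueness statement. Let $M$ be a minimal surface foliated by horizontal horocycles whose vertical projections have a common asymptotic point $\xi$. As the vertical projection restricts to an isometry of each slice, every horocyclic slice of $M$ is itself centered at $\xi$; since each horocycle centered at $\xi$ is invariant under the parabolic one-parameter group $G_\xi$ of isometries fixing $\xi$ (acting trivially on the $\real$-factor), the surface $M$ is $G_\xi$-invariant. Working in the half-space model with $\xi=\infty$, so that the horocycles centered at $\xi$ are the lines $y=\mathrm{const}$ and $G_\xi$ acts by $x\mapsto x+c$, this invariance forces $M$ to be the graph of a function depending only on the horocyclic coordinate, $t=u(y)$. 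The minimal surface equation $\mathrm{div}_{\hiper}(\nabla u/\sqrt{1+|\nabla u|^2})=0$ then collapses to the first-order ODE $u'/\sqrt{1+y^2(u')^2}=C$, whose solutions are $u(y)=\pm\arcsin(Cy)+c_0$. Finally, the parameters $C>0$ and $c_0\in\real$ are absorbed, respectively, by a hyperbolic dilation fixing $\xi$ and by a vertical translation of $\hiper^2_{\var}\times\real$, so that every such $M$ is congruent to $\ca_0$. The delicate points are justifying the reduction to a graph over $y$ (equivalently, that the generating curve in the $(y,t)$-plane is embedded and does not fold back) and verifying that the resulting two-parameter family is a single orbit of the isometry group.
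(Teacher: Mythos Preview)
The paper does not prove this proposition at all: its ``proof'' is the single line ``See Proposition 4.17 in \cite{d}, p.~6277--6278,'' deferring entirely to Daniel's original argument. Your proposal is therefore not a variant of the paper's approach but a complete self-contained proof where the paper offers none. Your verification that $\chi$ lands in $\hiper^2_{\var}\times\real$, is a conformal embedding, and is minimal (via harmonicity of the factors, using the Gauss formula for $\hiper^2_{\var}\subset\lo^3$) is correct; the identification of the $u$-slices as horocycles through the null vector $\omega=(-1,0,-1)$ is clean and matches what the paper establishes afterwards, by a longer coordinate computation in the disk model, in the paragraphs following the proposition.

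For the uniqueness part, your reduction via parabolic invariance to an ODE in the upper half-plane model is the right idea and yields the correct first integral $u'/\sqrt{1+y^2(u')^2}=C$, with solutions $u=\pm\arcsin(Cy)+c_0$; indeed, transporting $\ca_0$ to the half-plane with the common asymptotic point at $\infty$ gives exactly $y=\cos t$, i.e.\ $C=1$, $c_0=\pi/2$. The one genuine soft spot, which you flag yourself, is the assumption that the generating curve is a graph $t=u(y)$: the full surface $\ca_0$ is \emph{not} globally such a graph (the map $t\mapsto\cos t$ is two-to-one on $(-\pi/2,\pi/2)$), so the ODE should be written for the generating curve in the $(y,t)$ half-plane parameterized by $t$ or by arclength, and then the graph representation recovered on each monotone branch. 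Once that is done, your absorption of $(C,c_0)$ by a hyperbolic dilation fixing $\xi$ and a vertical translation completes the argument.
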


\begin{proof}
See Proposition 4.17 in \cite{d}, p. 6277-6278.
\end{proof}

Considering the Poincaré disk model for $\hiper^2$, that is,
$$\hiper^2_{\bol}=\{(x,y)\in \real^2; \; x^2+y^2<1\},$$
endowed with the metric
$$g_{\bol}=\left(\frac{2}{1-(x^2+y^2)}\right)^2(dx^2+dy^2),$$
and using the fact that the map
$$
\begin{array}{l}
\Pi:\hiper^2_{\var}\rightarrow\hiper^2_{\bol}\\
{\displaystyle \Pi\;(x_0,x_1,x_2)=\left(\frac{x_1}{1+x_0},\frac{x_2}{1+x_0}\right), \; (x_0,x_1,x_2)\in \hiper^2_{\var},}
\end{array}
$$
is a isometry between the models $(\hiper^2_{\var},g_{\var})$ and $(\hiper^2_{\bol},g_{\bol})$ of $\hiper^2$ (see, for instance, \cite{st:cours}, p. 205-206),
we get
\begin{equation}
X(u,v)=\left(\frac{2v}{v^2+(1+\cos u)^2},\;\frac{v^2-1+\cos^2 u}{v^2+(1+\cos u)^2},\;u\right),
\label{eq:c0b}
\end{equation}
defined for ${\displaystyle (u,v)\in \left(-\frac{\pi}{2},\frac{\pi}{2}\right)\times\real}$, is a parameterization for the surface $\ca_0$ in $\hiper^2\times\real$, where $\hiper^2=(\hiper^2_{\bol},g_{\bol})$, corresponding to the expression (\ref{eq:c0m}) obtained in $\hiper^2_{\var}\times\real$.

To see clearly that the curves $u=u_0$ are horocycles in $\hiper^2\times\{u_0\}$ such that its vertical projections over the slice $\hiper^2\times\{0\}$ have the same asymptotic point, we can rewrite the expression (\ref{eq:c0b}), denoting by

\begin{eqnarray}
x & = & {\displaystyle \frac{2v}{v^2+(1+\cos u)^2}} \label{eq:xhoro}
\end{eqnarray}
and by
\begin{eqnarray}
y & = & {\displaystyle \frac{v^2-1+\cos^2 u}{v^2+(1+\cos u)^2}}, \label{eq:yhoro}
\end{eqnarray}
eliminating the parameter $v$ from the expressions above and obtaining, after several calculations and simplifications, that the equations (\ref{eq:xhoro}) and (\ref{eq:yhoro}) result in
\begin{eqnarray}
{\displaystyle x^2+\left(y-\frac{\cos u}{1+\cos u}\right)^2} & = & \frac{1}{(1+\cos u)^2}\cdot \label{eq:ceuclid}
\end{eqnarray}

We fixe ${\displaystyle u_0\in\left(-\frac{\pi}{2},\frac{\pi}{2}\right)}$. We conclude from (\ref{eq:xhoro}), (\ref{eq:yhoro}) and (\ref{eq:ceuclid}) that the horizontal curve $X(u_0,\cdot)$ of the surface $\ca_0$ 
%$\alpha_{u_0}:\real\rightarrow\hiper^2\times{u_0}$ dada por
%$$\alpha_{u_0}(v)=\left(\frac{2v}{v^2+(1+\cos u_0)^2},\;\frac{v^2-1+\cos^2 u_0}{v^2+(1+\cos u_0)^2},\; u_0\right), \; v\in\real,$$
is contained in the Euclidean circle with center ${\displaystyle \left(0,\frac{\cos u_0}{1+\cos u_0},u_0\right)}$ and radius ${\displaystyle \frac{1}{1+\cos u_0}}$ on the horizontal plane $u=u_0$, that is, it is contained in the Euclidean circle parameterized by
$$\beta_{u_0}(\theta)=\left(\frac{\cos \theta}{1+\cos u_0},\frac{\cos u_0+\sin \theta}{1+\cos u_0}, u_0\right), \; \theta\in\left[\frac{\pi}{2},\frac{5\pi}{2}\right].$$

Note that ${\displaystyle \beta_{u_0}\left(\frac{\pi}{2}\right)=\beta_{u_0}\left(\frac{5\pi}{2}\right)=(0,1,u_0)\in\partial_{\infty}(\hiper^2\times\{u_0\})=\esf^1\times\{u_0\}}$.

We denote by $\tilde{\beta}_{u_0}$ the restriction of $\beta_{u_0}$ on the open interval ${\displaystyle \left(\frac{\pi}{2},\frac{5\pi}{2}\right)}$. $\tilde{\beta}_{u_0}$ parameterizes the Euclidean circle with center ${\displaystyle \left(0,\frac{\cos u_0}{1+\cos u_0},u_0\right)}$ and radius ${\displaystyle \frac{1}{1+\cos u_0}}$ minus the point $(0,1,u_0)$ on the horizontal plane $u=u_0$.

Note that the point $(0,1,u_0)$ does not belong to the curve $X(u_0,\cdot)$, because, otherwise, we would have $v=0$ and $\cos u_0=-1$, but this do not occurs for ${\displaystyle u_0\in \left(-\frac{\pi}{2},\frac{\pi}{2}\right)}$. Thus the curve $X(u_0,\cdot)$ is contained in $\tilde{\beta}_{u_0}$. On the other hand, the curve $\tilde{\beta}_{u_0}$ is contained in the horizontal curve $X(u_0,\cdot)$. In fact, given ${\displaystyle \theta\in\left(\frac{\pi}{2},\frac{5\pi}{2}\right)}$, if we take ${\displaystyle v_{\theta}=(1+\cos u_0)\frac{\cos \theta}{1-\sin \theta}}$, we get $X(u_0,v_{\theta})=\tilde{\beta}_{u_0}(\theta)$. Therefore $\tilde{\beta}_{u_0}$ is a reparameterization for the horizontal curve $X(u_0,\cdot)$.

Now it is easy to conclude that the horizontal curve $\tilde{\beta}_{u_0}$ parameterizes, in fact, a horocycle in $\hiper^2\times\{u_0\}$ with asymptotic point $(0,1,u_0)\in \partial_{\infty}(\hiper^2\times\{u_0\})$, for each ${\displaystyle u_0\in \left(-\frac{\pi}{2},\frac{\pi}{2}\right)}$.

Thus,
\begin{equation}
Y(u,\theta)=\left(\frac{\cos \theta}{1+\cos u},\frac{\cos u+\sin \theta}{1+\cos u},u\right), \; (u,\theta)\in\left(-\frac{\pi}{2},\frac{\pi}{2}\right)\times\left(\frac{\pi}{2},\frac{5\pi}{2}\right)
\label{eq:c0b2}
\end{equation}
is a reparameterization for (\ref{eq:c0b}) and hence, it is a new parameterization for the surface $\ca_0$.

Therefore the surface $\ca_0$ is foliated by the horocycles $u=u_0$, ${\displaystyle u_0\in\left(-\frac{\pi}{2},\frac{\pi}{2}\right)}$, such that its vertical projections over the slice $\hiper^2\times\{0\}$ are horocycles having the same asymptotic point $(0,1,0)$.

Note that $\ca_0$ has the height $\pi$. (Observe that the horocycles $u=u_0$ converge to ${\displaystyle \esf^1\times\left\{\frac{\pi}{2}\right\}}$ when ${\displaystyle u_0\rightarrow\frac{\pi}{2}}$ and to ${\displaystyle \esf^1\times\left\{-\frac{\pi}{2}\right\}}$ when ${\displaystyle u_0\rightarrow-\frac{\pi}{2}}$.)

Moreover, $\ca_0$ is symmetric with respect to the slice $\hiper^2\times\{0\}$. In fact, $Y(-u,\theta)=R(Y(u,\theta))$ for all ${\displaystyle u\in\left[0,\frac{\pi}{2}\right)}$, where $R$ is the reflection with respect to the slice $\hiper^2\times \{0\}$.

Observe also that $\ca_0$ is invariant by a one-parameter family of horizontal parabolic isometries which let globally fixed each horocycle of $\hiper^2\times\{0\}$ tangent to the asymptotic point $(0,1,0)\in\partial_{\infty}(\hiper^2\times\{0\})=\esf^1\times\{0\}$ and, therefore, they let globally fixed each horocycle $u=u_0$, ${\displaystyle u_0\in\left(-\frac{\pi}{2},\frac{\pi}{2}\right)}$.

%%%%%%%

We denote by $\ho_0$ the horocycle in $\hiper^2\times\{0\}$ parameterized by $Y(0,\cdot)$ and by $\dom$ the closed connected region of $(\hiper^2\times\{0\})\setminus\ho_0$ which has $\partial_{\infty}(\hiper^2\times\{0\})=\esf^1\times\{0\}$ as asymptotic boundary.

Considering each horizontal horocycle $Y(u,\cdot)$ of the surface $\ca_0$ as a Euclidean circle minus the point $(0,1,u)$, observe that its Euclidean radius ${\displaystyle \frac{1}{1+\cos u}}$ increases from ${\displaystyle \frac{1}{2}}$ to $1$ when $u$ varies from $0$ to ${\displaystyle \frac{\pi}{2}}$. From this follows that the vertical projections of the horocycles $Y(u,\cdot)$ over $\hiper^2\times\{0\}$ cover $\dom$ exactly once when $u$ varies in the interval ${\displaystyle \left[0,\frac{\pi}{2}\right)}$.

Hence the upper half $\ca_0^+$ of the surface $\ca_0$, that is,
$$\ca_0^+=\left\{Y(u,\theta); \; (u,\theta)\in\left[0,\frac{\pi}{2}\right)\times\left(\frac{\pi}{2},\frac{5\pi}{2}\right)\right\},$$
where $Y(u,\theta)$ is given in (\ref{eq:c0b2}), is a vertical graph of some real function over $\dom$. We denote this function by $\Upsilon$.

%%%%%%%%%%%%%%%%%%%%%%%%%%%%%%%%%%%%%

\subsection{Perron process for the minimal hypersurface equation in $\hiper^n\times\real$}
\label{subsec:perron}

%%%%%%%%%%%%%%%%%%%%%%%%%%%%%%%%%%%

\indent\par Perron process will be the principal ingredient for the solution of the proposed problems in this paper. Notations, definitions and results of this subsection are based in the works \cite{st:hnxr}, \cite{st:h3} (p. 682-685) and \cite{st:asymp} (p. 321-325). We include them here to make this paper self contained.

\indent\par We consider the following Dirichlet problem:
$$\mbox{\textbf{(P) }}\left\{
\begin{array}{cc}
Q_0 \,u=0 \mbox{ in } \Omega, & u\in C^2(\Omega)\cap C^0(\bar{\Omega}),\\
  & \\
\left. u \right|_{\partial \Omega}=f, & 
\end{array}
\right.$$
where $\Omega$ is any bounded domain in $\hiper^n\times\{0\}\equiv \hiper^n$,  $Q_0 \,u$ is defined in (\ref{eq:q0n}) and $f:\partial \Omega\rightarrow\real$ is any bounded continuous function, defined in $\partial \Omega$.

Let $v:\bar{\Omega}\rightarrow\real$ be a continuous function.
Let $U\subset \Omega$ be a closed ball in $\hiper^n\times\{0\}$. Then ${\displaystyle \left. v\right|_{\partial U}}$ has a unique minimal extension $\tilde{v}_U$ on $U$, continuous up to the boundary $\partial U$.
In fact, since $U$ is a bounded domain having the boundary $\partial U$ of class $C^2$ with positive mean curvature when the normal vector field to $\partial U$ points to $U$ ($\partial U$ is a hyperbolic $(n-1)$-sphere), it follows from Theorem 1.5 of J. Spruck in \cite{s} (p. 787-788) that the Dirichlet problem
$$\left\{
\begin{array}{cc}
Q_0 \,u=0 \mbox{ in } U, & u\in C^2(U)\cap C^0(\bar{U}),\\
  & \\
\left. u \right|_{\partial U}=\varphi, & 
\end{array}
\right.$$
has a unique solution for arbitrary continuous boundary data $\varphi$ (and, particularly, for $\varphi=\left. v \right|_{\partial U}$).

Therefore the function $M_U(v)$, given by
$$M_U(v)(q)=\left\{
\begin{array}{rl}
v(q), & \mbox{ if } q\in \bar{\Omega}\setminus U,\\
  & \\
\tilde{v}_U(q), & \mbox{ if } q\in U,
\end{array}
\right.$$
is well defined and it is continuous in $\bar{\Omega}$.

\begin{dfn}
\textbf{(\cite{st:asymp}; \cite{st:hnxr}) } We say that $v\in C^0(\bar{\Omega})$ is a \textit{subsolution} (respectively \textit{supersolution}) of the problem \textbf{(P)} if
\begin{enumerate}
	\item For any closed ball $U\subset \Omega$, we have $v\leq M_U(v)$ (respectively $v\geq M_U(v)$),
	\item ${\displaystyle \left. v\right|_{\partial \Omega}\leq f}$ (respectively ${\displaystyle \left. v\right|_{\partial \Omega}\geq f}$).
\end{enumerate}
\label{def:subsupersol}
\end{dfn}

We present below some classical properties about subsolutions and supersolutions (see, for instance, \cite{ch}, p. 307-309):

\begin{enumerate}

	\item If $v\in C^2(\Omega)$, the condition 1 in the above definition is equivalent to $Q_0(v)\geq 0$ for subsolution or $Q_0(v)\leq 0$ for supersolution.
	
	\item If $v, \, w\in C^0(\Omega)$ are functions such that $v\geq w$, then $M_U(v)\geq M_U(w)$ for all closed ball $U\subset \Omega$. (This is a consequence of the Maximum Principle.)
	
	\item If $v, \, w\in C^0(\Omega)$ are two subsolutions (respectively, supersolution) of \textbf{(P)} then the function $u\in C^0(\Omega)$, defined by $u(q)=\max(v(q),w(q))$, $q\in \Omega$ (respectively, $u(q)=\min(v(q),w(q))$, $q\in \Omega$), again is a subsolution (respectively, supersolution) of \textbf{(P)}.
	
	\item If $v$ is a subsolution (respectively, supersolution) of \textbf{(P)} and $U\subset \Omega$ is a closed ball then $M_U(v)$ again is a subsolution (respectively, supersolution) of \textbf{(P)}.
	
	\item Let $v, \, w:\bar{\Omega}\rightarrow \real$ be two continuous functions such that $M_U(v)\geq v$ and $M_U(w)\leq w$ for any closed ball $U\subset \Omega$. Suppose that ${\displaystyle \left.v\right|_{\partial\Omega}\leq \left.w\right|_{\partial\Omega}}$. Then $v\leq w$ on $\Omega$. Roughly speaking, a supersolution is greater than a subsolution.
	
\end{enumerate}

\begin{dfn}
\textbf{(Definition 4.2 in \cite{st:hnxr} - Barriers) } We consider the Dirichlet problem \textbf{(P)}. Let $p\in \partial\Omega$ be a boundary point.
\begin{enumerate}
	\item Suppose that, for any $M>0$  and for any $k\in \nat$, there are an open neighborhood $V_k$ of $p$ in $\hiper^n$ and a function $\omega_k^+$ (respectively, $\omega_k^-$) in $C^2(V_k\cap\Omega)\cap C^0(\overline{V_k\cap \Omega})$ such that
	
\begin{enumerate}
	\item ${\displaystyle \left.\omega_k^+\right|_{\partial\Omega\cap V_k}\geq f}$ and ${\displaystyle \left.\omega_k^+\right|_{\partial V_k\cap \Omega}\geq M}$
	
	(respectively, ${\displaystyle \left.\omega_k^-\right|_{\partial\Omega\cap V_k}\leq f}$ and ${\displaystyle \left.\omega_k^-\right|_{\partial V_k\cap \Omega}\leq -M)}$;
	
	\item $Q_0 (\omega_k^+) \leq 0$ (respectively, $Q_0 (\omega_k^-) \geq 0$) in $V_k\cap \Omega$;
	
	\item ${\displaystyle \lim_{k\rightarrow +\infty} \omega_k^+(p)=f(p)}$ (respectively, ${\displaystyle \lim_{k\rightarrow +\infty} \omega_k^-(p)=f(p)}$).
\end{enumerate}

\item Suppose that there exists a supersolution $\phi$ (respectively, a subsolution $\eta$) in $C^2(\Omega)\cap C^0(\overline{\Omega})$ such that $\phi(p)=f(p)$ (respectively, $\eta(p)=f(p)$).

\end{enumerate}
In both cases, 1 or 2, we say that $p$ admits an \textit{upper barrier} $\{\omega_k^+\}_{k\in \nat}$ or $\phi$ (respectively, \textit{lower barrier} $\{\omega_k^-\}_{k\in \nat}$ or $\eta$) for the problem \textbf{(P)}.

\label{def:barreiras}
\end{dfn}

\begin{dfn}
\textbf{(Definition 4.3 in \cite{st:hnxr} - $C^0$ convex domains) } Let $\Omega\subset\hiper^n$ be a $C^0$ domain.

\begin{enumerate}

\item We say that $\Omega$ is \textit{convex} at $p\in \partial\Omega$ if there exists a neighborhood of $p$ in $\bar{\Omega}$ totally contained in one of the closed halfspaces determined by some geodesic hyperplane of $\hiper^n$ passing through $p$.
	
\item We say that $\Omega$ is \textit{strictly convex} at $p\in \partial\Omega$ if there exists a neighborhood $U_p\subset \bar{\Omega}$ of $p$ in $\bar{\Omega}$ which is totally contained in one of the closed halfspaces determined by some geodesic hyperplane $\Pi$ of $\hiper^n$ passing through $p$ such that $U_p\cap \Pi=\{p\}$.

\end{enumerate}

\label{def:convexn}
\end{dfn}

\begin{rem}
Considering the problem \textbf{(P)}, it is possible to obtain barriers at any point in $\partial \Omega$ where the domain $\Omega\subset\hiper^n$ is convex, for any bounded continuous boundary data $f$.

For the particular case $n=2$, R. Sa Earp and E. Toubiana have made this construction in \cite{st:asymp}, using sequences of Scherk type minimal graphs in $\hiper^2\times\real$, defined over geodesic triangles in $\hiper^2$. Specifically, for the construction of an upper barrier (in the sense of the Definition \ref{def:barreiras}-1) for the problem \textbf{(P)} at a point $p_0\in \partial \Omega$ where $\Omega$ is $C^0$ convex, the authors have exhibited a sequence of isosceles geodesic triangles $\Delta_k$, which are small enough in $\hiper^2$, where each $\Delta_k$ contains $p_0$ on its geodesic axis of symmetry, and they have defined Scherk type minimal graphs $\omega_k^+$ over $V_k:=\interior \: \Delta_k$, satisfying the following conditions, for each $k\in \nat$ and for any $M>0$:

(1) ${\displaystyle \left.\omega_k^+\right|_{\partial\Omega\cap V_k}\geq f}$ and ${\displaystyle \left.\omega_k^+\right|_{\partial V_k\cap \Omega}\geq M};$
	
(2) $Q_0 (\omega_k^+) = 0$ in $V_k\cap \Omega$;
	
(3)	${\displaystyle \omega_k^+(p_0)=f(p_0)+1/k}$.

Analogously, one obtain a lower barrier at $p_0$. For the reader which is interested in the details of the construction of these barriers, see Example 4.1 of  \cite{st:asymp}, p. 322-323.

For $n\geq 3$, the same authors have exhibited in \cite{st:hnxr} barriers at any point of $\partial \Omega$ where the domain $\Omega\subset\hiper^n$ is convex, for arbitrary bounded continuous boundary data $f$, using sequences of Scherk type minimal graphs in $\hiper^n\times\real$, defined over special rotational domains in  $\hiper^n$. The procedures adopted in this construction were similar to the case $n=2$. For the details about this construction, see Theorem 5.2 in \cite{st:hnxr}.
\label{obs:barreiras}
\end{rem}

\begin{thm}
{\rm\textbf{(Theorem 4.5 in \cite{st:hnxr} - Perron process) }} Let $\Omega\subset\hiper^n$ be a bounded domain and $f:\partial \Omega\rightarrow\real$ a bounded continuous function. Let $\phi$ be a bounded supersolution of the Dirichlet problem {\rm\textbf{(P)}}, for instance, the constant function $\phi\equiv \sup f$. Set
$$S_\phi=\{s\in C^0(\bar{\Omega}); \mbox{ s is subsolution of {\rm \textbf{(P)}}, with } s\leq \phi\}.$$
(Observe that $S_\phi\neq \emptyset$, since the constant function $\eta\equiv \inf f$ belongs to $S_\phi$.)
For each $q\in\bar{\Omega}$, we define
$$u(q):=\sup_{s\in S_\phi} s(q).$$
We have the following:

\begin{enumerate}
	\item The function $u$ is of class $C^2$ on $\Omega$ and satisfies the minimal equation $Q_0 \, u=0$.
	
	\item Let $p\in \partial\Omega$ be a boundary point and suppose that $p$ admits a barrier. Then the solution $u$ is continuous at $p$ and satisfies $u(p)=f(p)$.
\end{enumerate}
 
\label{teo:perron}
\end{thm}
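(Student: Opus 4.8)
The plan is to run the classical Perron method, using three ingredients already in place: the solvability of the Dirichlet problem on balls (Spruck's theorem, which makes the minimal lift $M_U$ well defined), the Maximum Principle (Theorem \ref{teo:pmaxn}), and the five structural properties of sub/supersolutions listed above. For part (1), fix $q_0 \in \Omega$ and choose $s_m \in S_\phi$ with $s_m(q_0) \to u(q_0)$. Replacing $s_m$ by $\max(s_1, \ldots, s_m, \eta)$ with $\eta \equiv \inf f$, I may assume the sequence is monotone nondecreasing and still lies in $S_\phi$. Pick a closed ball $U \subset \Omega$ with $q_0 \in \mathrm{int}\,U$ and replace each term by its minimal lift $M_U(s_m)$: by property (4) these remain subsolutions in $S_\phi$, they stay below $\phi$ by the comparison property (5), they are still monotone nondecreasing by property (2), and they solve $Q_0 = 0$ on $\mathrm{int}\,U$. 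The analytic heart of the argument is that this bounded monotone sequence of solutions of the minimal equation converges on compact subsets of $\mathrm{int}\,U$ to a solution $v \in C^2(\mathrm{int}\,U)$ with $Q_0 v = 0$; by construction $v \le u$ on $\mathrm{int}\,U$ and $v(q_0) = u(q_0)$.

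It then remains to show $v \equiv u$ on $\mathrm{int}\,U$. Suppose not, so $v(q_1) < u(q_1)$ at some $q_1 \in \mathrm{int}\,U$; choose $\bar s \in S_\phi$ with $\bar s(q_1) > v(q_1)$ and repeat the construction with the sequence $\max(s_m, \bar s)$, obtaining a second limit solution $w$ satisfying $v \le w \le u$, $w(q_0) = u(q_0) = v(q_0)$, yet $w(q_1) \ge \bar s(q_1) > v(q_1)$. Since $v$ and $w$ both solve the elliptic equation $Q_0 = 0$, their difference satisfies a linear elliptic equation, and $w - v \ge 0$ attains an interior minimum $0$ at $q_0$; the strong maximum principle then forces $w \equiv v$, contradicting $w(q_1) > v(q_1)$. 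Hence $u = v$ is $C^2$ and solves $Q_0 u = 0$ on $\mathrm{int}\,U$, and as $q_0$ and $U$ were arbitrary, $u \in C^2(\Omega)$ with $Q_0 u = 0$ throughout $\Omega$.

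For part (2), let $p \in \partial\Omega$ admit a barrier; I treat the sequential barriers of Definition \ref{def:barreiras}(1), the global sub/supersolution case of item (2) being immediate from property (5). Fix $\epsilon > 0$, use condition (c) to pick $k$ with $\omega_k^+(p) < f(p) + \epsilon$, and take $M > \sup_\Omega \phi$. For any $s \in S_\phi$, comparing on $V_k \cap \Omega$ (on $\partial\Omega \cap V_k$ one has $s \le f \le \omega_k^+$, and on $\partial V_k \cap \Omega$ one has $s \le \phi \le M \le \omega_k^+$) gives $s \le \omega_k^+$ by property (5), hence $u \le \omega_k^+$ there and $\limsup_{q \to p} u(q) \le \omega_k^+(p) < f(p) + \epsilon$. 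Symmetrically, setting $s_k := \max(\omega_k^-, \inf f)$ on $V_k \cap \Omega$ and $s_k := \inf f$ elsewhere produces, after checking that the glueing is continuous (using $\omega_k^- \le -M \le \inf f$ near $\partial V_k \cap \Omega$) and that $s_k \le \phi$ (by comparing $\omega_k^-$ with $\phi$), a subsolution $s_k \in S_\phi$ with $u \ge s_k \ge \omega_k^-$ on $V_k \cap \Omega$, whence $\liminf_{q \to p} u(q) \ge \omega_k^-(p) \to f(p)$. Letting $\epsilon \to 0$ yields that $u$ is continuous at $p$ with $u(p) = f(p)$.

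The main obstacle I expect is the compactness step in part (1): guaranteeing that the monotone limit of minimal-equation solutions on $U$ is again a genuine $C^2$ solution. This rests on interior a priori gradient and higher-order estimates for the minimal graph equation in $\hiper^n \times \real$ (the analogue of the gradient/Harnack package underlying the Euclidean Perron theory), together with the strong maximum principle for the difference of two solutions; everything else reduces to bookkeeping with the five structural properties and the Maximum Principle.
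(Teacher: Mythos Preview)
Your proof is the classical Perron argument carried out in full, and it is essentially correct; the only nontrivial analytic input you flag yourself, namely the interior gradient estimates and compactness for solutions of $Q_0=0$ on balls, is exactly what is needed and is available in this setting (Spruck's estimates in \cite{s}, which already underlie the solvability of $M_U$).

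The paper, however, does not prove this theorem at all: its entire proof is the one-line citation ``See, for instance, Theorem 4.5 in \cite{st:hnxr}.'' So there is no meaningful comparison of approaches to make---you have supplied the standard argument that the cited reference contains, while the paper simply invokes that reference. Your write-up would serve perfectly well as an expanded proof of the quoted statement.
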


\begin{proof}
See, for instance, Theorem 4.5 in \cite{st:hnxr}.
\end{proof}

%%%%%%%%%%%%%%%%%%%%%%%%%%%%%%%%%%%%%%

\section{Interior/ exterior sphere condition and convexity in $\hiper^n$}
\label{sec:cond}
\vspace*{0,5 cm}

\begin{dfn}
Let $\Gamma$ be a $C^0$ hypersurface in $\hiper^n$, which is homeomorphic to $\esf^{n-1}$, and let $a$, $b>0$ be positive real numbers.
\begin{enumerate}
\item We say that $\Gamma$ satisfies the \textit{interior sphere condition of radius $a$} if, at each point $p\in\Gamma$, there exists a hyperbolic $(n-1)$-sphere of radius $a$ passing through $p$, contained in the closure of the bounded connected region of $\hiper^n\setminus\Gamma$.
	
\item We say that $\Gamma$ satisfies the \textit{exterior sphere condition of radius $b$} if, at each point $p\in\Gamma$, there exists a hyperbolic $(n-1)$-sphere of radius $b$ passing through $p$, such that its mean convex side is contained in the closure of the unbounded connected region of $\hiper^n\setminus\Gamma$.
	
\item We say that $\Gamma$ satisfies the \textit{exterior horosphere condition} if, at each point $p\in\Gamma$, there exists a horosphere passing through $p$, such that its mean convex side is contained in the closure of the unbounded connected region of $\hiper^n\setminus\Gamma$.
	
\item We say that $\Gamma$ is \textit{convex (in the hyperbolic sense)} if, at each point $p\in\Gamma$, there exists a geodesic hyperplane passing through $p$, such that $\Gamma$ is contained in one of the closed halfspaces of $\hiper^n$ determined by this hyperplane.
\end{enumerate}
\label{def:ceie}
\end{dfn}

Note that a hyperbolic $(n-1)$-sphere $S\subset \hiper^n$ of radius $r$ satisfies automatically the interior sphere condition of radius $a=r$.

Recall that a horosphere in $\hiper^n$ is the limit hypersurface of a sequence of hyperbolic $(n-1)$-spheres $S_{\rho_n}(q_n)$ having center $q_n$ and radius $\rho_n$, such that $\rho_n\rightarrow\infty$ and $q_n$ approaches the asymptotic point $p'\in\partial_{\infty}\hiper^n$ of this horosphere when $n\rightarrow\infty$. (See, for instance, \cite{v}, p. 37-38 or \cite{bj}, p. 101-102.) Due to this fact, the horospheres are considered as hyperbolic $(n-1)$-spheres of center at the infinity (namely, at its asymptotic point $p'\in\partial_{\infty}\hiper^n$) and infinite radius. Therefore, the \textit{exterior horosphere condition} above defined might have been recalled as \textit{exterior sphere condition of infinite radius}.

\begin{rem}
In Definition \ref{def:ceie}, if we consider the particular case $n=2$, $\Gamma$ is reduced to a $C^0$ Jordan curve contained in $\hiper^2$ and the expressions \textit{sphere} (or \textit{$(n-1)$-sphere}), \textit{horosphere}, \textit{geodesic hyperplane} and \textit{halfspace} are naturally replaced by \textit{circle}, \textit{horocycle}, \textit{geodesic line} and \textit{halfplane}, respectively.
\label{obs:ccie}
\end{rem}

\begin{prop}
Let $\Gamma$ be a $C^0$ hypersurface in $\hiper^n$, which is homeomorphic to $\esf^{n-1}$. If $\Gamma$ is convex then it satisfies the exterior sphere condition of infinite radius.
\label{prop:chec}
\end{prop}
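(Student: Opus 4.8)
The plan is to verify the exterior horosphere condition pointwise, i.e. to produce at each $p\in\Gamma$ a horosphere through $p$ whose horoball (its mean convex side) lies in the closure of the unbounded component of $\hiper^n\setminus\Gamma$. The bridge between the convexity hypothesis and the desired conclusion is the elementary fact that a geodesic hyperplane behaves like a horosphere of infinite radius, so that at any of its points one can inscribe, on either prescribed side, a horoball tangent to it at that point.

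First I would fix $p\in\Gamma$ and use convexity (Definition \ref{def:ceie}, item 4) to obtain a geodesic hyperplane $\Pi_p$ through $p$ with $\Gamma$ contained in one closed halfspace, say $\overline{H^+}$. Since $\Gamma$ is homeomorphic to $\esf^{n-1}$, it separates $\hiper^n$ into a bounded (interior) and an unbounded (exterior) connected component. The open halfspace $H^-$ is disjoint from $\Gamma$, is connected, and is unbounded, hence lies in the exterior component; consequently $\overline{H^-}$ is contained in the closure of the unbounded component of $\hiper^n\setminus\Gamma$. Thus it suffices to exhibit a horosphere through $p$ whose mean convex side is contained in $\overline{H^-}$.

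To build it, I would let $\gamma$ be the geodesic through $p$ orthogonal to $\Pi_p$ and entering $H^-$, set $\xi=\gamma(+\infty)\in\partial_{\infty}\hiper^n$, and take the horosphere $H$ based at $\xi$ passing through $p$. Because $H$ is orthogonal to every geodesic ending at $\xi$, and $\Pi_p$ is orthogonal to $\gamma$ at $p$, the horosphere $H$ and the hyperplane $\Pi_p$ share the same tangent space at $p$; the mean curvature vector of $H$ points toward $\xi$, into the associated horoball $B$, so $B$ is precisely the mean convex side of $H$.

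The hard part will be the inclusion $B\subset\overline{H^-}$, i.e. that the horoball tangent to $\Pi_p$ at $p$ stays entirely on the $H^-$ side, touching $\Pi_p$ only at $p$. I would prove this after normalizing by an isometry carrying $\xi$ to $\infty$ in the upper half-space model $\{(x,y):x\in\real^{n-1},\,y>0\}$. Then $H$ becomes a horizontal slice $\{y=y_0\}$ with $B=\{y\ge y_0\}$, while $\Pi_p$, being a geodesic hyperplane through $p$ orthogonal to the now vertical geodesic $\gamma$, must be the Euclidean hemisphere of radius $y_0$ centred at the foot of $p$, having $p$ as its top. A one-line Euclidean estimate, $|x-c|^2+y^2\ge y^2\ge y_0^2$ for $y\ge y_0$ with equality only at $p$, shows that $B$ lies in the closed exterior of that hemisphere, which is exactly the image of $\overline{H^-}$ (the side into which $\gamma$ runs up to $\infty$). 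Undoing the normalization gives $B\subset\overline{H^-}$, hence $B$ is contained in the closure of the unbounded component of $\hiper^n\setminus\Gamma$. Since $p\in\Gamma$ was arbitrary, $\Gamma$ satisfies the exterior horosphere condition, that is, the exterior sphere condition of infinite radius.
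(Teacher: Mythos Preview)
Your proof is correct and follows essentially the same route as the paper: use convexity to produce a supporting geodesic hyperplane $\Pi_p$ at $p$, take the geodesic $\gamma$ orthogonal to $\Pi_p$ at $p$ heading into the far halfspace, and let the horosphere be the one based at the asymptotic endpoint of $\gamma$ on that side and passing through $p$. The only substantive difference is in justifying that the horoball stays on the correct side of $\Pi_p$: the paper invokes the Maximum Principle (comparing the totally geodesic $\Pi_p$ with the horosphere tangent to it at $p$), whereas you give a direct computation in the upper half-space model after sending $\xi$ to $\infty$; your argument is more elementary and also makes explicit why $\overline{H^-}$ lies in the closure of the unbounded component of $\hiper^n\setminus\Gamma$, a point the paper leaves implicit.
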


\begin{proof}
If $\Gamma$ is convex then, given $p\in\Gamma$, there exists a geodesic hyperplane $\Pi$ passing through $p$, such that $\Gamma$ is contained in one of the closed halfspaces of $\hiper^n$ determined by this hyperplane.

We denote by $U_1$ the closed halfspace of $\hiper^n$ determined by $\Pi$  in which $\Gamma$ is contained and by $U_2$ the other halfspace.

Let $\gamma$ be the geodesic line of $\hiper^n$ orthogonal to $\Pi$ at $p$.

We denote by $\gamma_i:=U_i\cap\gamma$, $i=1,2$.

Let $p'\in \partial_{\infty}\hiper^n$ be the asymptotic point of $\gamma_2$. We denote by $\h_p$ the horosphere which is orthogonal to $\gamma$ at $p$ and which has asymptotic point $p'$. By construction, $\h_p$ is tangent to $\Pi$ at $p$.

Orienting $\h_p$ such that the normal vector field to this horosphere points to the connected component of $\hiper^n\setminus\h_p$ whose asymptotic boundary is $p'$, we get from the Maximum Principle that $\h_p$ must be contained in $U_2$.

Since $\Gamma\subset U_1$, it follows that the mean convex side of $\h_p$ is contained in the closure of the unbounded connected region of $\hiper^n\setminus\Gamma$.

Thus, $\Gamma$ satisfies the exterior horosphere condition, that is, the exterior sphere condition of infinite radius.
\end{proof}

\begin{rem}
A $C^0$ hypersurface $\Gamma\subset\hiper^n$ which is homeomorphic to $\esf^{n-1}$ may satisfy the exterior sphere condition of some radius $0<\rho<\infty$ and be non-convex. An example of this fact for the case $n=2$ is illustrated in Figure \ref{fig:feijaocce}.
	
In Figure \ref{fig:feijaocce}, we note that the Jordan curve $\Gamma$ satisfies the exterior sphere condition of some radius $0<\rho<\infty$. However, it is not convex in the hyperbolic sense. In fact, in the given example, the origin $(0,0)$ of $\hiper^2$ belongs to $\Gamma$ and we observe that all geodesic lines passing through $(0,0)$ divides $\Gamma$ in two parts at least.

It is not difficult to generalize the above example for $n\geq 2$.
\label{obs:cceconvex}
\end{rem}

\begin{figure}[htb]
\begin{center}
\includegraphics[scale=.4]{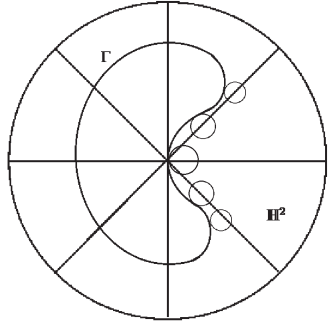}
\caption{An example of a Jordan curve in $\hiper^2$ which satisfies the exterior sphere condition of some radius $0<\rho<\infty$ and that is not convex.}
\label{fig:feijaocce}
\end{center}
\end{figure}

\begin{figure}[htb]
\begin{center}
\includegraphics[scale=.4]{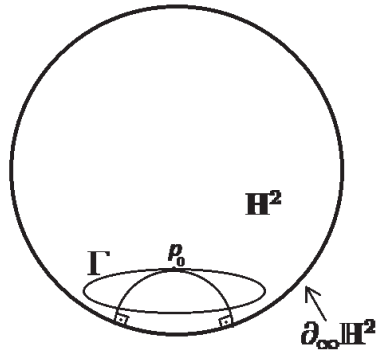}
\caption{An example of a Jordan curve in $\hiper^2$ which satisfies the exterior horosphere condition and that is not convex.}
\label{fig:elipseeuclid}
\end{center}
\end{figure}

\begin{rem}
The reciprocal of Proposition \ref{prop:chec} is false. For instance, for the case $n=2$, we take an Euclidean ellipse $\Gamma\subset\hiper^2$, near enough to $\partial_{\infty}\hiper^2$ in the Euclidean sense, as in Figure \ref{fig:elipseeuclid}.
	
It is easy to see that $\Gamma$ satisfies the exterior horosphere condition. However, the unique geodesic line $\alpha$ tangent to $\Gamma$ at $p_0$ divides $\Gamma$ in three parts, as we see in Figure \ref{fig:elipseeuclid}. (Of course the geodesic lines which are transversal to $\Gamma$ at $p_0$ will do the same.)

We may generalize this example for $n\geq 2$, taking as $\Gamma\subset\hiper^n$ a $(n-1)$-dimensional ellipsoid, near enough to  $\partial_{\infty}\hiper^n$ in the Euclidean sense. 
\label{obs:checonvex}
\end{rem}

\begin{rem}
We recall that, in $\real^n$, a $C^0$ hypersurface $\Gamma\subset\real^n$, which is homeomorphic to $\esf^{n-1}$, is said to be \textit{convex in the Euclidean sense} if, at each point $p\in\Gamma$, there exists an affine hyperplane passing through $p$, such that $\Gamma$ is contained in one of the closed halfspaces of $\real^n$ determined by this hyperplane. We recall also that a $C^0$ hypersurface $\Gamma\subset\real^n$, which is homeomorphic to $\esf^{n-1}$, satisfies the \textit{Euclidean exterior sphere condition of radius $b$} if, given any point $p\in\Gamma$, there exists a Euclidean $(n-1)$-sphere of radius $b$ passing through $p$, such that its mean convex side is contained in the closure of the unbounded connected region of $\real^n\setminus\Gamma$. We recall that the affine hyperplanes are the geodesic hyperplanes of $\real^n$ and also, they may be seen as the Euclidean $(n-1)$-spheres of infinite radius. From this follows that the hypersurface $\Gamma$ is convex in the Euclidean sense if and only if it satisfies the Euclidean exterior sphere condition of infinite radius.

However, in $\hiper^n$ there is not the equivalence between convexity and exterior sphere condition of infinite radius. In fact, at first, we recall that the horospheres (and not the geodesic hyperplanes) are the $(n-1)$-spheres of the infinite radius in $\hiper^n$. Now, according to Proposition \ref{prop:chec}, all $C^0$ hypersurface in $\hiper^n$ which is homeomorphic to $\esf^{n-1}$ and convex (in the sense hyperbolic) satisfies the (hyperbolic) exterior sphere condition of infinite radius. But, as we have seen in Remark \ref{obs:checonvex}, a $C^0$ hypersurface in $\hiper^n$ may satisfy the (hyperbolic)  exterior sphere condition of infinite radius and not be convex (in the hyperbolic sense). This means that, in $\hiper^n$, the exterior sphere condition of infinite radius is weaker than the hyperbolic convexity condition.
\label{obs:ceeri}
\end{rem}

%%%%%%%%%%%%%%%%%%%%%%%%%%%%%%%%%%%%%%%%%%%%%%%%%

\section{Proof of Theorem \ref{teo:ac}}
\label{sec:teoac}

\indent\par We want to solve the following problem:
$$\mbox{\textbf{(P$_0$) }}\left\{
\begin{array}{l}
Q_0 \, u=0 \mbox{ in } \Omega, \;\; u\in C^2(\Omega)\cap C^0(\bar{\Omega}),\\
\left. u \right|_{\Gamma}=0, \;\; \left. u \right|_{\Gamma_i}=h, \;\; i=1,\; ...,\; k.  
\end{array}
\right.$$

We note that the function $\phi\equiv h \mbox{ in } \bar{\Omega}$ is a supersolution for the problem \textbf{(P$_0$)}.

We define the set
$$S_0=\{s\in C^0(\bar{\Omega}); \; s \mbox{ is a subsolution of \textbf{(P$_0$)}, with } s\leq h\}.$$

The above set is not empty, because the function $s_0\equiv 0 \mbox{ in }\bar{\Omega}$ belongs to $S_0$.

We define the function
$$u(q):=\sup_{s\in S_0} s(q), \;\; q\in \bar{\Omega}.$$

It follows from Perron process that $u\in C^2(\Omega)$ and it satisfies $Q_0 \, u=0$. Moreover, by Perron process, we need to show that each point in $\partial\Omega$ admits barriers for the problem \textbf{(P$_0$)}. The barriers will be obtained in the sense of Definition \ref{def:barreiras}-2, that is, we shall show the existence of subsolutions $v_p$ and supersolutions $w_p$ of \textbf{(P$_0$)} such that $$v_p(p)=w_p(p)=h,$$
if ${\displaystyle p\in \bigcup_{i=1}^k \Gamma_i}$, and
$$v_p(p)=w_p(p)=0,$$ if $p\in \Gamma$.\\

Given $i=1,...,n$, let us consider a point $p\in\Gamma_i$.

We know that the function $w_p\equiv h$ in $\bar{\Omega}$ is a supersolution of \textbf{(P$_0$)} such that $w_p(p)=h$.
	
For the construction of a subsolution $v_p$ of \textbf{(P$_0$)} such that $v_p(p)=h$, we may consider a hyperbolic $(n-1)$-sphere $S_p$ with radius $R_1$ passing through $(p,h)$ and contained in the region of the slice $\hiper^n\times \{h\}$ bounded by the hypersurface $\Gamma_i+h \, e_{n+1}$, where $e_{n+1}=(0,...,0,1)$. We denote by $c_p$ the hyperbolic center of $S_p$ and by ${c_p}^*$ the vertical projection of $c_p$ on the slice $\hiper^n\times \{0\}$.
	
It follows from the hypotheses (\ref{eq:alfamin}) and (\ref{eq:alfar1}) that
$$\g_{R_1}(R_1)=0\leq h \leq \g_{R_1}(R_1+\delta),$$
where the function $\g_{R_1}$ is defined as in Formula (\ref{eq:gr}). Since $\g_{R_1}$ is continuous and increasing, we guarantee the existence of a unique $R_h \in [R_1,R_1+\delta]$ such that
$$h=\g_{R_1}(R_h),$$
and consequently, the construction of the piece of $n$-dimensional catenoid in $\hiper^n\times \real$ which is vertical graph of the function
$$-\g_{{c_p}^*, R_1}+ h, $$
over the set $\{q\in \hiper^n\times \{0\}; \; R_1\leq \rho_{{c_p}^*}(q)\leq R_h\}$. (Recall Formula (\ref{eq:gcr}).)\\

We note that this piece of catenoid has as boundary two hyperbolic $(n-1)$-spheres: $S_{1,p}\subset \hiper^n\times \{h\}$ and $S_{2,p}\subset \hiper^n\times \{0\}$, where $S_{1,p}=S_p$ and $S_{2,p}$ has center ${c_p}^*$ and radius $R_h$, see Figure \ref{fig:teoa-af1b-catenoide-def-ingles}.

\begin{figure}[htb]
\begin{center}
\includegraphics[scale=.53]{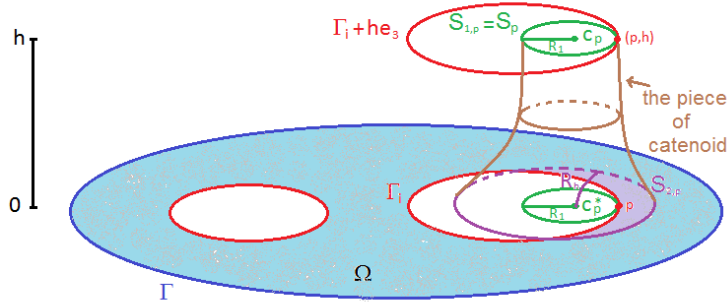}
\caption{The piece of catenoid (graph of $-\g_{{c_p}^*, R_1}+ h$) with boundary $S_{1,p}\cup S_{2,p}$ $(n=2)$.}
\label{fig:teoa-af1b-catenoide-def-ingles}
\end{center}
\end{figure}

Since $R_h\leq R_1+\delta$, $S_{2,p}$ is in the region of $\hiper^n\times \{0\}$ bounded by the hypersurface $\Gamma$, see again Figure \ref{fig:teoa-af1b-catenoide-def-ingles}.

The function $v_p$, defined in $\bar{\Omega}$ by
$$v_p(q)=\left\{
\begin{array}{rl}
-\g_{{c_p}^*, R_1}(q)+h, &  \mbox{ if } q\in\bar{\Omega} \mbox{ and it is in the region bounded by } \Gamma_i\\
 & \mbox{ and }S_{2,p},\\
0, & \mbox{ if } q\in\bar{\Omega} \mbox{ and it is not in the region bounded}\\
 & \mbox{ by } \Gamma_i \mbox{ and } S_{2,p},
\end{array}
\right.$$
is a subsolution for the problem \textbf{(P$_0$)}, see again Figure \ref{fig:teoa-af1b-catenoide-def-ingles}. In fact, by construction, $v_p\in C^0(\bar{\Omega})$ and $v_p$ is the maximum of two subsolutions of \textbf{(P$_0$)}, namely, $v_p=\max \{-\g_{{c_p}^*, R_1}+h, 0\}$ in $\bar{\Omega}$. Moreover, $v_p(p)=h$.\\

Now let us consider a point $p\in\Gamma$.

We know that the function $v_p\equiv 0$ in $\bar{\Omega}$ is a subsolution of \textbf{(P$_0$)} such that $v_p(p)=0$.

For the construction of a supersolution $w_p$ of \textbf{(P$_0$)} such that $w_p(p)=0$, we consider two cases:\\

\noindent \sub{Case 1}: $0<R_2<\infty$ and $n\geq 2$.

We may consider a hyperbolic $(n-1)$-sphere $Z_p$ with radius $R_2$, passing through $p$, such that the ball bounded by $Z_p$ is contained in the closure of the exterior of $\Gamma$ in $\hiper^n\times \{0\}$. We denote by $d_p$ the hyperbolic center of $Z_p$.

It follows from the hypotheses (\ref{eq:alfamin}) and (\ref{eq:alfar2}) that
$$\g_{R_2}(R_2)=0\leq h \leq \g_{R_2}(R_2+\delta).$$

Since the function $\g_{R_2}$ is continuous and increasing, we guarantee the existence of a unique 
$\bar{R}_h \in [R_2,R_2+\delta]$ such that
$$h=\g_{R_2}(\bar{R}_h),$$
and consequently, the construction of the piece of $n$-dimensional catenoid in $\hiper^n\times \real$ which is vertical graph of the function
$$\g_{d_p, R_2},$$
over the set $\{q\in \hiper^n\times \{0\}; \; R_2\leq \rho_{d_p}(q)\leq \bar{R}_h\}$.\\

We note that this piece of catenoid has as boundary two hyperbolic $(n-1)$-spheres: $Z_{1,p}\subset \hiper^n\times \{0\}$ and $Z_{2,p}\subset \hiper^n\times \{h\}$, where $Z_{1,p}=Z_p$ and $Z_{2,p}$ has center $(d_p,h)$ and radius $\bar{R}_h$, see Figure \ref{fig:teoa-af2a-catenoide-def-ingles}.

\begin{figure}[htb]
\begin{center}
\includegraphics[scale=.6]{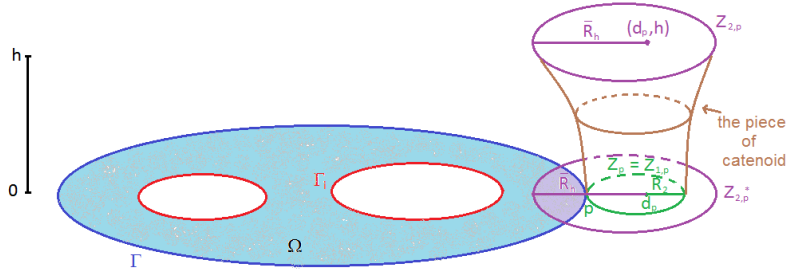}
\caption{The piece of catenoid (graph of $\g_{{d_p}, R_2}$) with boundary $Z_{1,p}\cup Z_{2,p}$ $(n=2)$.}
\label{fig:teoa-af2a-catenoide-def-ingles}
\end{center}
\end{figure}

Denoting by ${Z_{2,p}}^*$ the vertical projection of $Z_{2,p}$ in $\hiper^n\times \{0\}$, we see that ${Z_{2,p}}^*$ is a hyperbolic $(n-1)$-sphere of center $d_p=(d_p,0)$ and radius $\bar{R}_h\leq R_2+\delta$. This guarantees that the ball bounded by ${Z_{2,p}}^*$ is in the exterior of ${\displaystyle \bigcup_{i=1}^k \Gamma_i}$ in $\hiper^n\times \{0\}$, see again Figure \ref{fig:teoa-af2a-catenoide-def-ingles}.

The function $w_p$, defined in $\bar{\Omega}$ by
$$w_p(q)=\left\{
\begin{array}{rl}
\g_{d_p, R_2}(q), &  \mbox{ if } q\in\bar{\Omega} \mbox{ and it is in the region bounded by } {Z_{2,p}}^*\\
 & \mbox{ and }\Gamma,\\
h, & \mbox{ if } q\in\bar{\Omega} \mbox{ and it is not in the region bounded by } \\
 & \mbox{ } {Z_{2,p}}^* \mbox{ and }\Gamma, 
\end{array}
\right.$$
is a supersolution for the problem \textbf{(P$_0$)}, see again Figure \ref{fig:teoa-af2a-catenoide-def-ingles}. In fact, by construction, $w_p\in C^0(\bar{\Omega})$ and $w_p$ is the minimum of two supersolutions of \textbf{(P$_0$)}, namely, $w_p=\min \{\g_{{d_p}, R_2}, h\}$ in $\bar{\Omega}$. Moreover, $w_p(p)=0$.\\

\noindent \sub{Case 2}: $R_2=\infty$ and $n=2$.

In this case, we shall use the same notations adopted in Subsection \ref{subsec:exgrmin}-C.

We consider a horocycle $\Lambda_p$ passing through $p$ such that its mean convex side is contained in the closure of the exterior of $\Gamma$ in $\hiper^2\times\{0\}$. Up to a positive isometry of $\hiper^2\equiv\hiper^2\times\{0\}$, we may consider $p=(0,0,0)$ and $\Lambda_p$ as the horocycle parameterized by $Y(0,\cdot)$.

It follows from the hypotheses (\ref{eq:alfamin}) and (\ref{eq:alfar2}) that
\begin{equation}
0\leq h \leq \arcsec(e^{\delta})<\frac{\pi}{2}.
\label{eq:arcsec}
\end{equation}

We take
$$\ca_0^h=\left\{Y(u,\theta); \;\; (u,\theta)\in [0,h]\times \left(\frac{\pi}{2},\frac{5\pi}{2}\right)\right\}.$$

$\ca_0^h$ is the piece of the surface $\ca_0$ between the horocycles $\Lambda_{1,p}\subset \hiper^2\times \{0\}$ and $\Lambda_{2,p}\subset \hiper^2\times \{h\}$, where $\Lambda_{1,p}=\Lambda_p$ and $\Lambda_{2,p}$ is parameterized by $Y(h,\cdot)$, see Figure \ref{fig:teoa-caso2-suphoro-def-ingles}.

\begin{figure}[htb]
\begin{center}
\includegraphics[scale=.6]{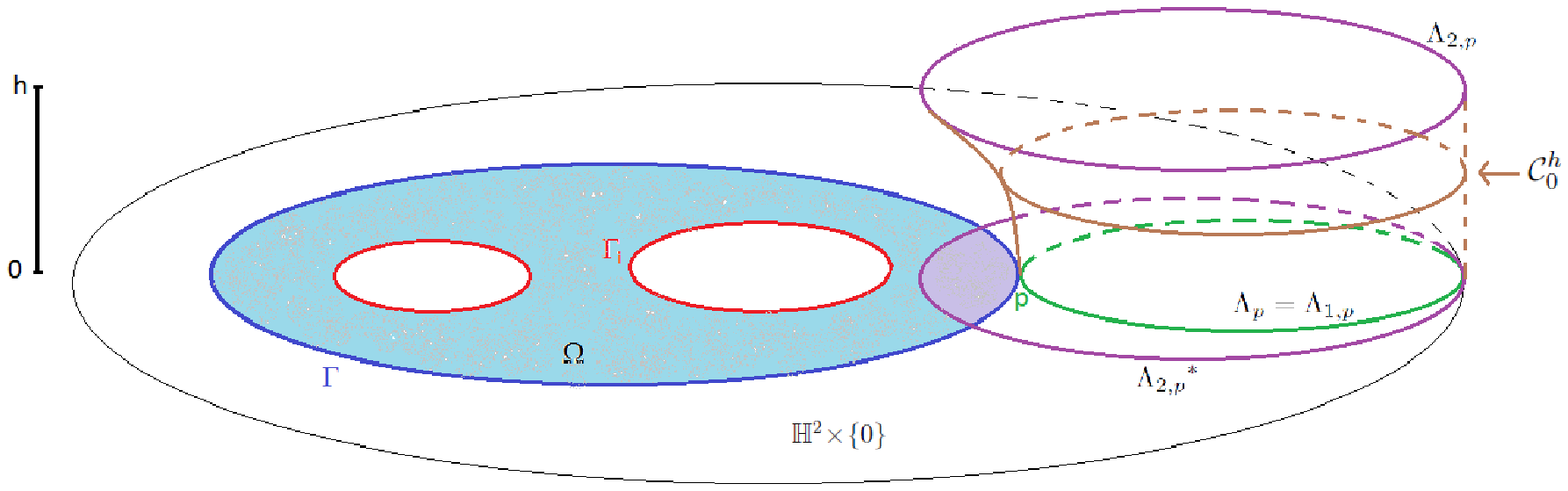}
\caption{$\ca_0^h$.}
\label{fig:teoa-caso2-suphoro-def-ingles}
\end{center}
\end{figure}

We denote by ${\Lambda_{2,p}}^*$ the vertical projection of $\Lambda_{2,p}$ in $\hiper^2\times \{0\}$. $\Lambda_p$ and ${\Lambda_{2,p}}^*$ are two horocycles of $\hiper^2\times \{0\}$ with the same asymptotic point $(0,1,0)\in\partial_{\infty}(\hiper^2\times \{0\})$, hence they are equidistant. Thus, the hyperbolic distance between the horocycles $\Lambda_p$ and ${\Lambda_{2,p}}^*$ is equal to the distance between the origin $(0,0,0)\in \Lambda_p$ and the point $q_h\in {\Lambda_{2,p}}^*$ nearest from the origin.

The point $q_h$ wanted must belong to the geodesic line of $\hiper^2\times\{0\}$ which passes through the origin and it is orthogonal to ${\Lambda_{2,p}}^*$. In that case, it is enough to find the intersection of the horocycle ${\Lambda_{2,p}}^*$ with the Euclidean line which passes through the origin, orthogonal to ${\Lambda_{2,p}}^*$.

Since $(0,0,0)=$ ${\displaystyle Y\left(0,\frac{3\pi}{2}\right)}$ and ${\displaystyle Y\left(h,\frac{3\pi}{2}\right)=}$ ${\displaystyle \left(0,\frac{\cos h - 1}{\cos h+1},h\right)}$, it follows that ${\displaystyle q_h=\left(0,\frac{\cos h-1}{\cos h+1},0\right)}$. Thus,

$$\begin{array}{cl}
\dist\left(\Lambda_p,{\Lambda_{2,p}}^*\right) & =\disto\left((0,0,0),q_h\right)\\
 & \\
 & {\displaystyle =\dist\left((0,0),\left(0,\frac{\cos h - 1}{\cos h+1}\right)\right)}\\
 & \\
 & =\ln (\sec h).
\end{array}$$

From (\ref{eq:arcsec}) follows that $0\leq \ln(\sec h) \leq \delta$, hence ${\displaystyle \dist\left(\Lambda_p,{\Lambda_{2,p}}^*\right) \leq \delta}$. This guarantees that ${\Lambda_{2,p}}^*$ is contained in the exterior of ${\displaystyle \bigcup_{i=1}^k \Gamma_i}$, see again Figure  \ref{fig:teoa-caso2-suphoro-def-ingles}.

We note that $\ca_0^h$ is the vertical graph of the function $\Upsilon$ over the closed region of $\hiper^2\times\{0\}$ between the horocycles $\Lambda_p$ and ${\Lambda_{2,p}}^*$.

The function $w_p$, defined in $\bar{\Omega}$ by
$$w_p(q)=\left\{
\begin{array}{rl}
\Upsilon(q), &  \mbox{ if } q\in\bar{\Omega} \mbox{ and it is in the region between } {\Lambda_{2,p}}^*\mbox{ and }\Gamma,\\
h, & \mbox{ if } q\in\bar{\Omega} \mbox{ and it is not in the region between } {\Lambda_{2,p}}^*\mbox{ and }\Gamma,  
\end{array}
\right.$$
is a supersolution for the problem \textbf{(P$_0$)}, see again Figure \ref{fig:teoa-caso2-suphoro-def-ingles}. In fact, by construction, $w_p\in C^0(\bar{\Omega})$ and $w_p$ is the minimum of two supersolutions of \textbf{(P$_0$)}, namely, $w_p=\min \{\Upsilon, h\}$ in $\bar{\Omega}$. Moreover, $w_p(p)=0$.

\vspace*{6 pt}

The uniqueness of the solution of \textbf{(P$_0$)} is guaranteed by the Maximum Principle. \hfill $\square$

\begin{rem}
We have used in the proof of Theorem \ref{teo:ac} a technique which is similar to the one used in Theorem 2.1 in \cite{er}. The main difference is with respect to the choose of the barriers for the case $n=2$ and $R_2=\infty$: while in \cite{er}, the authors have used affine planes of $\real^3$ (which are geodesic planes of $\real^3$) in the construction of the barriers for the case where the Jordan curve $\Gamma\subset\real^2$ satisfies the exterior circle condition of the radius $R_2=\infty$, we use, in the proof of Theorem \ref{teo:ac}, pieces of surfaces foliated by horocycles (which are not geodesic planes of $\hiper^2\times\real$) in the construction of the barriers for the corresponding case. See again Remark \ref{obs:ceeri} to understand the reason of that difference.
\label{obs:er}
\end{rem}

%%%%%%%%%%%%%%%%%%%%%%%%%%%%%%%%%%%%%%%%%%%
%\newpage
\section{Proof of Theorem \ref{teo:d}}
\label{sec:teod}

\indent\par We want to solve the following problem:
$$\mbox{\textbf{(P$_1$) }}\left\{
\begin{array}{l}
Q_0 \, u=0 \mbox{ in } \Omega, \;\; u\in C^2(\Omega)\cap C^0(\bar{\Omega}),\\
\left. u \right|_{\Gamma}=f, \;\; \left. u \right|_{\Gamma_i}=h, \;\; i=1,\; ...,\; k.  
\end{array}
\right.$$

We denote by $m:=\min f$ and by $M:=\max f$.

We note that the function $\phi\equiv h \mbox{ in } \bar{\Omega}$ is a supersolution for the problem \textbf{(P$_1$)}.

We define the set
$$S_1=\{s\in C^0(\bar{\Omega}); \; s \mbox{ is a subsolution of \textbf{(P$_1$)}, with } s\leq h\}.$$

The above set is not empty, because the function $s_1\equiv m \mbox{ in }\bar{\Omega}$ belongs to $S_1$.

We define the function
$$u(q):=\sup_{s\in S_1} s(q), \;\; q\in \bar{\Omega}.$$

It follows from Perron process that $u\in C^2(\Omega)$ and it satisfies $Q_0 \, u=0$. Moreover, by Perron process, we need to show that each point in $\partial\Omega$ admits barriers for the
problem \textbf{(P$_1$)}.

It follows from the convexity of $\Gamma$ and from the continuity of $f$  that we may use Example 4.1 of \cite{st:asymp}, in the case $n=2$, or Theorem 5.4 of \cite{st:hnxr}, in the case $n\geq 3$, for the construction of the barriers at each point $p\in \Gamma$, in the sense of Definition \ref{def:barreiras}-1, see again Remark \ref{obs:barreiras}. (Note that the condition ``$f$ is bounded'' is automatically satisfied, because $f$ is a continuous function defined over the compact set $\Gamma$.)

Now, for each point in ${\displaystyle \bigcup_{i=1}^k \Gamma_i}$, the barriers will be obtained in the sense of Definition \ref{def:barreiras}-2.\\

Given $i=1, \; ..., \;k$, let us consider a point $p\in \Gamma_i$.
	
We know that the function $w_p\equiv h$ in $\bar{\Omega}$ is a supersolution of \textbf{(P$_1$)} such that $w_p(p)=h$.\\

For the construction of a subsolution $v_p$ of \textbf{(P$_1$)} such that $v_p(p)=h$, we may assume, without loss of generality, that $m=0$, since the vertical translation is a isometry of $\hiper^n\times\real$. We may consider a hyperbolic $(n-1)$-sphere $S_p$ with radius $R$, passing through $(p,h)$ and contained in the region of the slice $\hiper^n\times \{h\}$ bounded by the hypersurface $\Gamma_i+h \, e_{n+1}$, where $e_{n+1}=(0,...,0,1)$. We denote by $c_p$ the hyperbolic center of $S_p$ and by ${c_p}^*$ the vertical projection of $c_p$ on the slice $\hiper^n\times \{0\}$.

It follows from the hypotheses (\ref{eq:oscn}) and (\ref{eq:alfarn}) that
$$\g_R(R)=0=m \leq M\leq h \leq \g_R(R+\delta).$$

Since the function $\g_{R}$ is continuous and increasing, we guarantee the existence of a unique $R_h \in [R,R+\delta]$ such that
$$h=\g_R(R_h).$$

Proceeding analogously to the initial part of the proof of Theorem \ref{teo:ac}, we obtain that the function $v_p$, defined by $v_p=\max \{-\g_{{c_p}^*, R}+ h, 0\}$ in $\bar{\Omega}$, is a subsolution for the problem \textbf{(P$_1$)} and $v_p(p)=h$.\\

The uniqueness of the solution of \textbf{(P$_1$)} is guaranteed by the Maximum Principle. \hfill $\square$

\begin{rem}
Remark \ref{obs:cceconvex} shows that there exists a $C^0$ hypersurface in $\hiper^n$ which is homeomorphic to $\esf^{n-1}$, satisfies the exterior sphere condition of some radius $0<\rho<\infty$ and it is not convex. Remark \ref{obs:checonvex} shows that there exists a $C^0$ hypersurface in $\hiper^n$ which is homeomorphic to $\esf^{n-1}$, satisfies the exterior sphere condition of radius $\rho=\infty$ and it is not convex. From this follows that Theorem \ref{teo:d} is not a generalization of Theorem \ref{teo:ac}. 
\label{obs:convexcee}
\end{rem}

%%%%%%%%%%%%%%%%%%%

\section{Proof of Proposition \ref{prop:inexn}}
\label{sec:propinexn}

\indent\par In this proof, we shall use a similar technique to the one observed in the final part of Theorem 5.1 in \cite{st:asymp} (p. 328-329).

Assume by contradiction that there exists a graph $M$ as mentioned in Proposition \ref{prop:inexn}.

Take
$$M^*:=-M+h e_{n+1},$$
where $e_{n+1}=(0,...,0,1)$ and $-M$ is the reflection of $M$ with respect to the slice $\Pi_1$. ($M^*$ is isometric to $M$.) Then $M^*$ is a connected compact minimal vertical graph of some function $u$ over $\Omega$ such that $\left. u \right|_{\beta^*}=0$ and $\left. u \right|_{\alpha}=h$. Notice that $M^*=\graph u$ is above the slice $\Pi_1$.

Now choose an $n$-dimensional catenoid $\ce_r:=\Sigma_r\cup -\Sigma_r$, where $\Sigma_r$ is the $n$-dimensional half-catenoid in $\hiper^n\times\real$ with rotational axis $\{0\}\times \real$ and generator curve given by $\g_r$ (see again Formula (\ref{eq:gr})), with $r$ big enough (that is, with large ``neck''), disjoint from $M^*$.

Let $\ce_r(\varepsilon)=\ce_r+\varepsilon e_{n+1}$ be the $\varepsilon$-vertical translation of $\ce_r$, with $\varepsilon>0$ small enough.

Now we shrink the catenoid $\ce_r(\varepsilon)$ in a family of catenoids with the same axis, shrinking its ``neck'', that is, making $r$ to approach zero.

Then we shall find a first point of contact of $M^*$ with one of these catenoids, we say, $\ce_{r_0}(\varepsilon)$.

We claim that the first point of contact does not occur on the boundary of $M^*$. In fact, notice that the boundary of $M^*$ is
$$\partial M^* = \beta^* \cup (\alpha+h e_{n+1}),$$
with $\beta^*\subset \Pi_1$ and $(\alpha+h e_{n+1})\subset \Pi_2$. Clearly $\ce_{r_0}(\varepsilon)$ does not touch $\beta^*$. Now for see that $\ce_{r_0}(\varepsilon)$ does not touch $(\alpha+h e_{n+1})$, we recall that the height of each $n$-dimensional half-catenoid $\Sigma_r$ increases from $0$ to $\pi/(2n-2)$, when $r$ increases from $0$ to $\infty$, see Subsection \ref{subsec:exgrmin}-A. From this follows that the height of each $n$-dimensional half-catenoid $\Sigma_r$ is smaller than $\pi/(2n-2)$ and it decreases to zero when $r$ approaches zero.
Therefore the height of the part of the catenoid $\ce_r(\varepsilon)$ above the slice $\Pi_1$ is smaller than $\pi/(2n-2)$ for all $r>0$ and it decreases when $r$ approaches zero (recall that $\varepsilon>0$ is taken small enough). Hence $\ce_{r_0}(\varepsilon)$ does not touch $(\alpha+h e_{n+1})$, because $h\geq \pi/(2n-2)$.

Thus the first point of contact of $M^*$ with the catenoid $\ce_{r_0}(\varepsilon)$ occurs on the interior of $M^*$. But this is a contradiction by the Maximum Principle, because the height
of $M^*$ is greater than or equal to $\pi/(2n-2)$ and the height of the part of the catenoid $\ce_{r_0}(\varepsilon)$ above the slice $\Pi_1$ is smaller than $\pi/(2n-2)$. This shows the proposition. \hfill $\square$

%%%%%%%%%%%%%%%% Referências Bibliográficas %%%%%%%%%%%%%%%%%%%%%%%

\end{document}